\newcommand{\scal}[2]{\langle{#1},{#2}\rangle}
\newcommand{\indicatrice}[1]{\mathds{1}_{{#1}}}
\newcommand{\R}{\mathbb{R}}
\newcommand{\N}{\mathbb{N}}
\newcommand{\sphere}{\mathbb{S}}
\newcommand{\esp}{\mathrm{E}}
\newcommand{\var}{\mathrm{Var}}
\newcommand{\Is}{\mathrm{Is}}
\newcommand{\proba}{\mathbb{P}}
\theoremstyle{plain}                    
\newtheorem{theoreme}{Theorem}
\newtheorem*{theoreme*}{Theorem}
\newtheorem{lemme}[theoreme]{Lemma}
\newtheorem{proposition}[theoreme]{Proposition}
\theoremstyle{definition} 
\newtheorem*{remarque}{Remark}
\title{Isoperimetry for spherically symmetric log-concave probability measures}
\author{Nolwen Huet\textsuperscript{1}}
\date{\today}
\begin{document}
\maketitle
\footnotetext[1]{
Institut de Mathématiques de Toulouse,
UMR CNRS 5219,
Université de Toulouse, 
31062 Toulouse, France.
Email: nolwen.huet@math.univ-toulouse.fr.\\
2000 Mathematics Subject Classification:  26D10, 60E15, 28A75.}
\begin{abstract}
We prove an isoperimetric inequality for probability measures $\mu$ on $\mathbb{R}^n$ with density proportional to $\exp(-\phi(\lambda | x|))$, where $|x|$ is the euclidean norm on $\mathbb{R}^n$ and $\phi$ is a non-decreasing convex function. It applies in particular when $\phi(x)=x^\alpha$ with $\alpha\ge1$. Under mild assumptions on $\phi$,  the inequality is dimension-free if $\lambda$ is chosen such that the covariance of $\mu$ is the identity.
\end{abstract}
\section{Introduction}
In his paper \cite{bobkov-radial-logconcave}, Bobkov studies the spectral gap for spherically symmetric probability measures $\mu$ on $\mathbb{R}^n$ with density
\[
\frac {d\mu(x)}{dx}= \rho(|x|),
\]
where $\rho$ is log-concave.  His main result can be stated as follows.
\begin{theoreme}[Bobkov \cite{bobkov-radial-logconcave}]
\label{thm:bobkov-poinc}
The best constant $P_\mu$ in the Poincaré inequality
\[
\mathrm{Var}_\mu(f) \le P_\mu \int |\nabla f|^2 \,d\mu,\quad \forall f\text{ smooth}
\]
satisfies
\[
\frac{\esp_\mu(|X|^2)}{n}  \le P_\mu \le 12 \frac{\esp_\mu(|X|^2)}{n}  .
\]
In particular, if $\mu$ is isotropic, we get
\[
1\le P_\mu\le 12,
\]
which means a spectral gap not depending on $n$.
\end{theoreme}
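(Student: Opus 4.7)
\emph{Proof plan.} The lower bound is immediate: testing the Poincar\'e inequality on the coordinate function $f(x)=x_1$ gives $|\nabla f|^2\equiv 1$, while spherical symmetry yields $\esp_\mu(x_1)=0$ and $\esp_\mu(x_1^2)=\esp_\mu(|X|^2)/n$, so that $\esp_\mu(|X|^2)/n\le P_\mu$.

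For the upper bound, the plan is to pass to polar coordinates $x=r\theta$ with $r\ge 0$ and $\theta\in\sphere^{n-1}$, under which $\mu$ factorizes as $\mu=\nu\otimes\sigma$ with $\sigma$ the uniform probability on $\sphere^{n-1}$ and $d\nu(r)\propto\rho(r)r^{n-1}\,dr$. The crucial observation is that $\nu$ is log-concave on $[0,\infty)$, as a product of two log-concave densities. Any smooth $f$ admits the $L^2(\mu)$-orthogonal decomposition $f(r\theta)=\bar f(r)+g(r,\theta)$, with $\bar f(r):=\int f(r\theta)\,d\sigma(\theta)$ and $\int g(r,\cdot)\,d\sigma=0$. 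Then
\[
\var_\mu(f)=\var_\nu(\bar f)+\int\var_\sigma\bigl(g(r,\cdot)\bigr)\,d\nu(r),
\]
and, the cross term vanishing upon angular integration,
\[
\int|\nabla f|^2\,d\mu=\int(\bar f'(r))^2\,d\nu(r)+\int(\partial_r g)^2\,d\mu+\int|\nabla_{\tan}g|^2\,d\mu.
\]

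I would then apply the sharp one-dimensional Poincar\'e inequality for log-concave measures to obtain $\var_\nu(\bar f)\le 12\,\var_\nu(R)\int(\bar f')^2\,d\nu$, and the spherical Poincar\'e inequality at radius $r$ (optimal constant $1/(n-1)$) to obtain $\var_\sigma(g(r,\cdot))\le \frac{r^2}{n-1}\int|\nabla_{\tan}g(r\theta)|^2\,d\sigma(\theta)$. Integration of the latter against $\nu$ yields a contribution of the form $\frac{1}{n-1}\esp_\mu\bigl[R^2|\nabla_{\tan}g|^2\bigr]$. Closing the proof with the advertised constant $12$ then reduces to two moment bounds: $\var_\nu(R)\le \esp_\nu(R^2)/n$ and $\esp_\mu[R^2|\nabla_{\tan}f|^2]\le C\,\esp(R^2)\int|\nabla f|^2\,d\mu$ for a numerical $C$. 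For the first, I would integrate by parts on $\nu$ using $\phi=-\log\rho$ and the density $r^{n-1}\rho(r)$: this yields the identities $\esp(R\phi'(R))=n$ and, more generally, $(n+k)\esp(R^k)=\esp(R^{k+1}\phi'(R))$, and combining them with log-concavity (noting that the mode $r_0$ satisfies $r_0\phi'(r_0)=n-1$ and that $R$ concentrates on scale $r_0/\sqrt{n}$ around $r_0$) should give the variance estimate.

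The second bound is where I expect the main obstacle to lie. One cannot simply pull the pointwise factor $R^2$ out of the integral, so the cleanest route is again to exploit the sharp concentration of $R$ provided by log-concavity of $\rho(r)r^{n-1}$: on the bulk $\{|R-r_0|\le c\,r_0\}$ the weight $R^2$ is comparable to $\esp(R^2)$, while log-concave tail estimates control the complementary event. Threading these estimates together while keeping the numerical constant $12$ in front of $\esp(R^2)/n$ is the delicate technical heart of the argument.
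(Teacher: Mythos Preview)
The paper does not itself prove this theorem; it is quoted from Bobkov~\cite{bobkov-radial-logconcave}, and the only ingredient of that proof the present paper records is the variance bound $\var_\nu(R)\le(\esp_\nu R)^2/n$ (see Section~\ref{sec:opt}). That said, your plan has a genuine gap which is worth naming.

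The obstacle you flag in the last paragraph is not just ``delicate'': the weighted inequality $\esp_\mu[R^2|\nabla_{\tan}f|^2]\le C\,\esp(R^2)\int|\nabla f|^2\,d\mu$ is false for any dimension-free $C$. Take $f(r\theta)=\chi(r)Y(\theta)$ with $Y$ a spherical harmonic of degree $k$ and $\chi$ a bump supported near some $M\gg\sqrt{\esp R^2}$. Letting $k\to\infty$ with $\chi$ fixed, both sides scale like $\lambda_k=k(k+n-2)$ and the inequality collapses to $\int\chi^2\,d\nu\le C\,\esp R^2\int\chi^2 r^{-2}\,d\nu$, which fails once $M^2>C\,\esp R^2$. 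Concentration of $R$ cannot rescue this, since it gives no control on where $|\nabla_{\tan}f|$ lives.

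The fix is to reverse the order of the tensorization. Set $h(\theta)=\int f(r\theta)\,d\nu(r)$ and write
\[
\var_\mu(f)=\var_\sigma(h)+\int\var_\nu\bigl(f(\cdot\,\theta)\bigr)\,d\sigma(\theta).
\]
The second term is handled by the one-dimensional log-concave Poincar\'e inequality with constant $12\,\var_\nu(R)\le 12\,\esp R^2/n$, applied for each fixed $\theta$. For the first, the spherical Poincar\'e gives $\var_\sigma(h)\le\frac{1}{n-1}\int|\nabla_{\sphere}h|^2\,d\sigma$; since $\nabla_{\sphere}h(\theta)=\int r\,\Pi_{\theta^\perp}(\nabla f)(r\theta)\,d\nu(r)$, Cauchy--Schwarz in the $\nu$-integral extracts $\esp R^2$ cleanly:
\[
|\nabla_{\sphere}h(\theta)|^2\le\esp_\nu(R^2)\int|\Pi_{\theta^\perp}(\nabla f)(r\theta)|^2\,d\nu(r).
\]
This yields $P_\mu\le\max\bigl(12\,\var_\nu R,\ \esp R^2/(n-1)\bigr)\le 12\,\esp R^2/n$ with no cut-off. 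This order---sphere inequality applied to the radial average rather than the reverse---is also the one the present paper adopts in the proof of Proposition~\ref{thm:tensor}.
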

Here ``$\mu$ is isotropic'' means that the covariance of $\mu$ is the identity. However, we already know  from the spherically invariance of $\mu$ that the covariance is proportional to the identity. So in our case, the isotropy of $\mu$ reduces merely to $\esp_\mu(|X|^2)=n$.

If we assume furthermore that $\mu$  itself is log-concave (see \cite{Borell-logconcave} for precisions about log-concave measures), that is to say that $\rho$ is non-decreasing, then one can show an isoperimetric inequality for $\mu$,  thanks to a result of Ledoux \cite{ledoux-geom-bounds} (generalized in \cite{EMilmanConvIsopSpectConc} by E. Milman) bounding the Cheeger constant from below by the spectral gap.

\begin{theoreme}\label{thm:bobkov-cheeger}
There exists a universal constant $c>0$ such that, for any $n\in\N$, all \emph{log-concave} measures $\mu$ on $\R^n$ \emph{spherically symmetric} and \emph{isotropic} satisfy the following isoperimetric inequality:
\begin{equation}\label{eq:cheeger}
\Is_\mu(a)\ge c\ a\wedge (1-a).
\end{equation}
\end{theoreme}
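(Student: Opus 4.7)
The plan is to combine the spectral gap estimate from Theorem \ref{thm:bobkov-poinc} with the Ledoux--Milman equivalence between spectral gap and Cheeger constant in the log-concave setting. Recall that the Cheeger constant $h_\mu$ of a probability measure is the largest $h$ such that $\mu^+(A)\ge h\cdot\mu(A)\wedge(1-\mu(A))$ for every Borel set $A$, where $\mu^+$ denotes the boundary measure. By definition $\Is_\mu(a)\ge h_\mu\cdot a\wedge(1-a)$, so it is enough to produce a universal lower bound on $h_\mu$ that does not depend on the dimension $n$.

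First, I would remove $\lambda$ from the picture: since $\mu$ is isotropic and spherically symmetric, the discussion above Theorem \ref{thm:bobkov-poinc} shows that $\esp_\mu(|X|^2)=n$, so Bobkov's theorem gives $P_\mu\le 12$ with no dependence on $n$ whatsoever. This is the only place where the isotropy hypothesis is used.

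Next, I would invoke the Ledoux--Milman theorem (Ledoux in \cite{ledoux-geom-bounds}, sharpened by E.~Milman in \cite{EMilmanConvIsopSpectConc}): for any log-concave probability measure on $\R^n$, the Cheeger constant and the spectral gap are comparable, more precisely $h_\mu\ge c_0/\sqrt{P_\mu}$ for some universal $c_0>0$. This is where the log-concavity assumption on $\mu$ enters in an essential way, i.e., where we use that $x\mapsto \rho(|x|)$ is a genuine log-concave density on $\R^n$ and not merely a radial density with log-concave profile. Combining the two bounds yields $h_\mu\ge c_0/\sqrt{12}$, which is the required universal Cheeger constant and implies \eqref{eq:cheeger} with $c=c_0/\sqrt{12}$.

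The main technical content of the argument is entirely encapsulated in the Ledoux--Milman reverse Cheeger inequality; once it is invoked as a black box the proof is a one-line combination. The only subtlety I would be careful about is checking that all hypotheses of \cite{ledoux-geom-bounds,EMilmanConvIsopSpectConc} are genuinely satisfied for a merely spherically symmetric log-concave $\mu$ (in particular, that no extra smoothness or moment condition is needed beyond log-concavity), but this is standard and requires no new computation.
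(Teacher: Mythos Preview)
Your proposal is correct and matches the paper's approach exactly: the paper does not give a formal proof of Theorem~\ref{thm:bobkov-cheeger} but simply remarks, just before the statement, that it follows by combining Bobkov's spectral gap bound (Theorem~\ref{thm:bobkov-poinc}) with Ledoux's result \cite{ledoux-geom-bounds} (generalized by E.~Milman \cite{EMilmanConvIsopSpectConc}) bounding the Cheeger constant from below by the spectral gap for log-concave measures. Your write-up is a faithful expansion of that one-sentence argument.
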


Here $\Is_\mu$ denote the isoperimetric function of $\mu$ and $a\wedge b=\min(a,b)$. We need some notation to define $\Is_\mu$ properly.
Let $A$ be a Borel set in $\R^n$. We define its $\varepsilon$-neighborhood by
\[A_ \varepsilon =\{x\in X; d(x,A)\leq \varepsilon\}.\] The boundary measure of $A$ is
\[
\mu^+(\partial A)=\liminf_{\varepsilon\to0^+}\frac{\mu(A_ \varepsilon)-\mu(A)}{\varepsilon}. 
\]
Now the isoperimetric function of $\mu$ is the largest function $\Is_\mu$ on $[0,1]$ such that for all Borel sets $A$,
\[
\mu^+(\partial A)\geq \Is_\mu\big(\mu(A)\big).
\]
The result of Bobkov answer the KLS-conjecture (\cite{KLS}) in the particular case of spherically symmetric measures. This conjecture asserts that \eqref{eq:cheeger} is true for all log-concave and isotropic measures $\mu$, with a universal constant $c$.

Our aim in this note is to sharpen Theorem \ref{thm:bobkov-cheeger} when $\rho$ is ``better'' than log-concave. For instance,
 the Gaussian measure $\gamma_n$ corresponding  to $\rho(t)=(2\pi)^{-\frac{n}{2}}\exp{-\frac{t^2}2}$, is known to satisfy the log-Sobolev inequality and the following isoperimetric inequality:
\[
\Is_{\gamma_n}(a)\ge c\ \big(
a\wedge (1-a)
\big)
 \sqrt{\log\frac{1}{a\wedge (1-a)}}
\]
with constants not depending on $n$ either.
We can ask what happens for regimes between exponential and Gaussian or even beyond the Gaussian case. This idea has already be developed in \cite{LO,barthe-level,BR-real-sob,BCR-orlicz,BCR-isop} for product measures. 
 
Let $\phi:\R^+\to\R^+$ be a  convex non-decreasing function  of class $\mathcal{C}^2$ such that $\phi(0)=0$. Then we consider the probability measure on $\R^n$
\[
\mu_{n,\phi}(dx) = \frac{e^{-\phi(|x|)} \, dx}{Z_{n,\phi}}
\]
and its associated radial measure on $[0,+\infty)$
\[
\nu_{n,\phi}(dr) =  |\sphere^{n-1}|\frac{r^{n-1} e^{-\phi(r)} \, dr}{Z_{n,\phi}}.
\]
In the particular case $\phi(x)=\phi_\alpha(x)= x^\alpha$ with $\alpha\ge1$, we note $\mu_{n,\alpha}=\mu_{n,\phi_\alpha}$ and $\nu_{n,\alpha}=\nu_{n,\phi_\alpha}$.
We denote by $\sigma_{n-1}$ the uniform probability measure on the unit sphere $\sphere^{n-1}$ of $\mathbb{R}^n$. If $X$ is a random variable of law $\mu_{n,\phi}$, then $|X|$ has the distribution $\nu_{n,\phi}$. Conversely, if $r$ and $\theta$ are independent random variables whose distributions are respectively $\nu_{n,\phi}$  and $\sigma_{n-1}$, then $X=r\theta$ has the distribution $\mu_{n,\phi}$. In view of this representation, we will derive inequalities for $\mu_{n,\phi}$ from inequalities for $\nu_{n,\phi}$ and $\sigma_{n-1}$.

In the subgaussian case, following the results for $\mu_{1,\phi}^{\otimes n}$ from \cite{BCR-isop}, we expect the isoperimetric function of $\mu_{n,\alpha}$ to be equal to a constant depending on $n$, times a symmetric function defined for $a\in[0,\frac12]$ by   
\[L_\alpha(a)=a\left(\log\frac1a\right)^{1-\frac{1}{\alpha}},\]
 and more generally for $\mu_{n,\phi}$, 
 \[L_\phi(a)=\frac{a\log\frac1a}{\phi^{-1}\left(\log\frac1a\right)}.\] 
 
Otherwise, since we are aiming at results which do not depend on $n$ and because of the Central-Limit Theorem (\cite{klartag-TCL}), we cannot expect better isoperimetric profile than the one of the Gaussian measure, proportional to
\[
L_2(a)=a\sqrt{\log\frac{1}{a}}.
\]

The point is to know the exact dependence in $n$ of the constant in front of the term in $a$, and in particular to know whether we recover universal constants in the isotropic case. The main theorems of this paper are stated next.
\begin{theoreme}\label{thm:mualpha}
 There exists a universal constant $C>0$  such that, for every $\alpha\ge 1$, for every $n\in\N^*$, and  every $a\in[0,1]$, it holds
\[
\Is_{\mu_{n,\alpha}}(a)\ge C {n}^{\frac12-\frac1\alpha}\ 
\big(
a\wedge (1-a)
\big)
 \left(\log\frac{1}{a\wedge (1-a)}\right)^{1-\frac{1}{\alpha\wedge2}}.
\]
\end{theoreme}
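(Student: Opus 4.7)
My plan is to reduce the $n$-dimensional isoperimetric problem to its two natural factors via the polar decomposition. If $X\sim\mu_{n,\alpha}$ then $X=r\theta$ with $r\sim\nu_{n,\alpha}$ and $\theta\sim\sigma_{n-1}$ independent. I would establish an isoperimetric inequality for $\nu_{n,\alpha}$ on $[0,+\infty)$ and combine it with the L\'evy--Schmidt spherical isoperimetric inequality for $\sigma_{n-1}$.

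The first step is to prove
\[
\Is_{\nu_{n,\alpha}}(a)\ge c\,n^{\frac12-\frac1\alpha}\,L_{\alpha\wedge2}\big(a\wedge(1-a)\big).
\]
Since $\nu_{n,\alpha}$ is log-concave on $[0,+\infty)$ with density $p_\nu(r)\propto r^{n-1}e^{-r^\alpha}$, the Bobkov--Houdr\'e formula gives $\Is_{\nu_{n,\alpha}}(a)=p_\nu(F_\nu^{-1}(a))\wedge p_\nu(F_\nu^{-1}(1-a))$. A Stirling/Laplace analysis shows the density peaks near $r^\ast\asymp n^{1/\alpha}$ with peak value $\asymp n^{\frac12-\frac1\alpha}$, and is locally Gaussian around $r^\ast$ with standard deviation $\asymp n^{\frac1\alpha-\frac12}$; in the deep right tail the density decays like $e^{-r^\alpha}$. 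Interpolating these regimes, $\Is_{\nu_{n,\alpha}}$ follows the Gaussian profile $\sqrt{\log 1/a}$ near the mode and the sub-$\alpha$ profile $(\log 1/a)^{1-1/\alpha}$ in the far tail, and the overall lower envelope is exactly $n^{\frac12-\frac1\alpha}L_{\alpha\wedge2}$.

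The second step fuses this with the spherical bound $\Is_{\sigma_{n-1}}(a)\ge c\sqrt{n-1}\,L_2(a\wedge(1-a))$. I would do it through Bobkov-type functional inequalities: show that $\nu_{n,\alpha}$ satisfies a Bobkov inequality with profile $L_{\alpha\wedge2}$ and constant of order $n^{2/\alpha-1}$, and that the sphere of radius $r$ satisfies Bobkov with profile $L_2$ (or the weaker $L_\alpha$ when $\alpha<2$) and constant of order $r^2/n$. Tensorising on $[0,+\infty)\times\sphere^{n-1}$ and pushing forward under $\pi:(r,\theta)\mapsto r\theta$ produces a Bobkov inequality for $\mu_{n,\alpha}$ with profile $L_{\alpha\wedge2}$ and effective constant $\asymp n^{2/\alpha-1}$, since $r^2/n\asymp n^{2/\alpha-1}$ at the typical radius $r\asymp n^{1/\alpha}$. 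The transfer of gradients uses $|\nabla f|^2=(\partial_r\tilde f)^2+r^{-2}|\nabla_{\sphere}\tilde f|^2$ in polar coordinates, and extracting the isoperimetric function via the standard smoothed-indicator argument yields the claimed bound with ratio $1/\sqrt{n^{2/\alpha-1}}=n^{\frac12-\frac1\alpha}$.

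The main obstacle lies in the fusion step. For $\alpha<2$ the radial and spherical profiles ($L_\alpha$ versus $L_2$) do not match, so one must downgrade the spherical profile to $L_\alpha$ before tensorising (this is legitimate since a Bobkov inequality for a stronger profile implies one for any weaker profile). More critically, the spherical constant $r^2/n$ is \emph{random}, and replacing it by its typical value $n^{2/\alpha-1}$ requires either truncating the state space to $\{|X|\le R\}$ with $R\asymp n^{1/\alpha}$ (and controlling the truncation defect in the isoperimetric extraction), or developing a weighted Bobkov inequality that accommodates the random constant via concentration of $r$ under $\nu_{n,\alpha}$. Harmonising the two factors and handling the random constant is the technical heart of the argument.
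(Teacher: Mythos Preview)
Your plan is essentially the paper's: prove isoperimetry for $\nu_{n,\alpha}$, pass to a functional inequality, tensorize with the spherical one using the polar-coordinate identity $|\nabla f|^2=|\partial_r g|^2+r^{-2}|\nabla_\theta g|^2$, and neutralise the random factor $r$ by a cut-off/truncation argument---precisely the first of the two options you list at the end. The only tactical differences are that the paper obtains $\Is_{\nu_{n,\alpha}}$ not by your direct Laplace/quantile analysis but from Bobkov's isoperimetric inequality for log-concave measures combined with ball-probability estimates (Klartag's thin-shell bound for $a\ge e^{-cn}$, an explicit tail bound for $a\le e^{-cn}$), and that it replaces $L_\phi$ by the equivalent concave profile $I_\phi=\Is_{\mu_{1,\phi}}$ so that the passage from isoperimetric to functional inequality goes through cleanly.
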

It can be seen as a corollary of the following more general theorem.
\begin{theoreme}\label{thm:muphi}
Let $\phi:\R^+\to\R^+$ be a  convex non-decreasing function  of class $\mathcal{C}^2$ such that $\phi(0)=0$. If moreover we assume that \begin{enumerate}[i)]
\item\label{thm:H1} $\sqrt{\phi}$ is concave, then for every $n\in\N^*$, and  every $a\in[0,1]$, it holds
\[
\Is_{\mu_{n,\phi}}(a)\ge C  \frac{\sqrt{n}}{\phi^{-1}(n)} \ \phi^{-1}(1)
 \frac{\big(
a\wedge (1-a)
\big)\log\frac1{a\wedge (1-a)}}{\phi^{-1}\left(\log\frac1{a\wedge (1-a)}\right)},
\]
\item\label{thm:H2}  $x\mapsto{\sqrt{\phi(x)}}/{x}$ is
increasing, then for every $n\in\N^*$, and  every $a\in[0,1]$, it holds
\[
\Is_{\mu_{n,\phi}}(a)\ge C\frac{\sqrt{n}}{\phi^{-1}(n)}\ 
\big(
a\wedge (1-a)
\big)
 \sqrt{\log\frac{1}{a\wedge (1-a)}},
\]
\end{enumerate}
where $C>0$ is a universal constant.
\end{theoreme}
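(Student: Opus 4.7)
My plan is to use the factorization $\mu_{n,\phi} = \text{law of }r\theta$ with $r\sim\nu_{n,\phi}$ and $\theta\sim\sigma_{n-1}$ independent, together with the orthogonal decomposition $|\nabla f|^2 = (\partial_r f)^2 + r^{-2}|\nabla_\theta f|^2$ of the ambient gradient into a radial and a spherical part. The strategy is to combine a one-dimensional isoperimetric estimate for $\nu_{n,\phi}$ on $(0,\infty)$ with the L\'evy--Gromov inequality on $\sphere^{n-1}$. Since the profiles in (i) and (ii) are nonlinear in $a$, the combining step is cleanest at the level of Bobkov/Beckner-type functional inequalities, which are equivalent to isoperimetric inequalities with a given rate function.

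First I would handle the radial measure. Its density on $(0,\infty)$ is proportional to $e^{-V(r)}$ with $V(r)=\phi(r)-(n-1)\log r$; this $V$ is strictly convex with a unique minimum $r_0$ satisfying $r_0\phi'(r_0)=n-1$, and under either assumption on $\phi$ one checks $r_0$ is of order $\phi^{-1}(n)$. Applying the Bobkov--G\"otze / Barthe--Cattiaux--Roberto Muckenhoupt-type criterion for one-dimensional measures, I expect to get, under (i), an isoperimetric inequality with profile proportional to $L_\phi$ and constant of order $\phi^{-1}(1)\sqrt{n}/\phi^{-1}(n)$, the $\phi^{-1}(1)$ prefactor arising from the regime $r<\phi^{-1}(1)$ where the $(n-1)\log r$ term of $V$ dominates; and under (ii), the condition that $\sqrt{\phi(x)}/x$ be increasing forces a quadratic lower bound on $\phi$ and yields the Gaussian profile $a\sqrt{\log(1/a)}$ with constant of order $\sqrt{n}/\phi^{-1}(n)$. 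On the sphere, L\'evy--Gromov gives $\Is_{\sigma_{n-1}}(a)\geq c\sqrt{n-1}\,a\sqrt{\log(1/a)}$, which on the sphere of radius $r$ rescales to a factor $\sqrt{n-1}/r$; evaluated against $\nu_{n,\phi}$ this contributes at the typical size $1/r_0\sim 1/\phi^{-1}(n)$, matching the radial contribution.

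To combine, I would take a smooth $f\colon\R^n\to[0,1]$, condition on $|X|=r$ to apply the sphere inequality to the restriction $f(r\,\cdot)$, and then integrate in $r$ using the radial inequality; the orthogonality of the gradient decomposition ensures no loss in passing from $|\nabla_\theta f|/r$ and $\partial_r f$ back to $|\nabla f|$. In case (ii) the Gaussian rate function $a\sqrt{\log(1/a)}$ tensorizes in the standard Bobkov manner, so this conditioning argument goes through directly. In case (i), the rate function $L_\phi$ tensorizes less cleanly, and my plan is to split according to the size of $\log(1/a)$: when $\log(1/a)$ is bounded, Theorem~\ref{thm:bobkov-cheeger} already yields the Cheeger bound and only needs to be rescaled by $\phi^{-1}(1)$; when $\log(1/a)$ is large, one applies a direct Bobkov-type functional argument with the $L_\phi$ profile, in the spirit of \cite{BCR-isop}.

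I expect the main obstacle to be the combining step under assumption (i): ensuring that the $\phi^{-1}(1)$ prefactor emerges with the correct weight on both the small-$\log(1/a)$ and large-$\log(1/a)$ regimes, and that the two regimes glue into a single bound, requires careful bookkeeping. A secondary difficulty lies in the 1D estimate of Step~1, because $V$ behaves very differently on the two sides of its minimum (logarithmic-dominated for $r<r_0$, $\phi$-dominated for $r>r_0$), so the Muckenhoupt functional must be evaluated with care to extract the correct joint dependence on $n$ and on $\phi^{-1}(1)$.
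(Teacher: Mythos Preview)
Your overall architecture---polar factorization, a one-dimensional isoperimetric estimate for $\nu_{n,\phi}$, L\'evy--Gromov on $\sphere^{n-1}$, and a functional-inequality tensorization---matches the paper's. But there is a genuine gap in the combining step that you underestimate.

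When you apply the sphere inequality to $\theta\mapsto f(r\theta)$ and integrate in $r$, the gradient term that emerges is
\[
\frac{1}{C_{\sigma_{n-1}}}\int |x|\,\bigl|\Pi_{\theta^\perp}(\nabla f)\bigr|\,d\mu(x),
\]
with the unbounded weight $|x|$ inside the integral. Your remark that ``evaluated against $\nu_{n,\phi}$ this contributes at the typical size $1/r_0$'' is exactly the nontrivial point: concentration of $|X|$ near $r_0$ does not by itself allow you to replace $|x|$ by $r_0$ in an $L^1$ gradient integral, and orthogonality of the radial/tangential decomposition does nothing to remove this weight. The paper handles this with a Sodin-style cut-off: one multiplies $f$ by a Lipschitz $h$ supported in $\{|x|\le r_2\}$ with $r_2$ of order $\phi^{-1}(n)$, so that $|x|h\le r_2$, and then controls the two error terms (the $\int|\partial_r h|\,d\mu$ term and the discrepancy between $\int fh$ and $\int f$) using the tail bound $\nu_{n,\phi}\{r\ge r_1\}\le (er_1/\phi^{-1}(n))^n e^{-\phi(r_1)}$. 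This is why the paper's tensorization (Proposition~\ref{thm:tensor}) only yields the inequality for $a\ge e^{-cn}$; the range $a\le e^{-cn}$ is handled separately by applying Bobkov's log-concave isoperimetric inequality \emph{directly to $\mu_{n,\phi}$} with the same tail bound. Your split at ``$\log(1/a)$ bounded'' versus large, invoking Theorem~\ref{thm:bobkov-cheeger}, is a different and coarser cut that does not address the $|x|$ issue.

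A secondary difference: for the radial measure the paper does not use Muckenhoupt/Hardy criteria but rather Bobkov's inequality~\eqref{eq:isop-bobkov} for log-concave measures, choosing the radius $r$ via Klartag's thin-shell estimate for $a\ge e^{-cn}$ and via the explicit tail bound of Lemma~\ref{lem:queue-phi} for $a\le e^{-cn}$. Your Muckenhoupt route may well work, but the two-sided behaviour of $V(r)=\phi(r)-(n-1)\log r$ that you flag is precisely what makes that computation delicate, whereas the paper's route sidesteps it.
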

Further hypotheses ensure the optimality of these bounds among products of functions of $n$ and functions of $a$, and lead to dimension-free inequalities when normalizing measures to obtain isotropic ones. See Theorem \ref{thm:opt-iso} for more precise statement. 

Note that a straightforward application of Bobkov's inequality for log-concave measures (Theorem \ref{thm:isop-bobkov})  leads to the good profile but with
the wrong dimension dependent constant in front of the isoperimetric
inequality. For instance,  Lemma 4 of \cite{barthe-level} and the computation of exponential moments imply the Theorem \ref{thm:mualpha} with  $n^{-\frac1\alpha}$ instead of $n^{\frac12-\frac1\alpha}$.

\medskip

We introduce in Section \ref{sec:propphi} the different hypotheses made on $\phi$. Then we establish in Section \ref{sec:nu} the isoperimetric inequality for the radial measure. The proof relies on an inequality for log-concave measures due to Bobkov and some estimates of probabilities of balls. Section \ref{sec:tensor} is devoted to the argument of tensorization which yields the isoperimetric inequality  from the ones for the radial measure and the uniform probability measure on the sphere. A cut-off argument is needed to get rid of the case of large radius.  This tensorization relies on a functional version of the inequality whose proof is postponed to Section \ref{sec:func}. We combine the previous results in Section \ref{sec:isop-mu} to prove Theorem \ref{thm:muphi}. Eventually, we discuss the isotropic case and the optimality of the inequalities in Section \ref{sec:opt}.

\section{Hypotheses on $\phi$}\label{sec:propphi}

We make different assumptions on $\phi$, corresponding to the different cases in Theorems \ref{thm:muphi}, \ref{thm:nu}, and \ref{thm:opt-iso}. 
\begin{description}
\item[Hypotheses (H0)] $\phi:\R^+\to\R^+$ is a non-decreasing convex function of class $\mathcal{C}^2$ such that $\phi(0)=0$.
\item[Hypotheses (H1)] $\phi$ satisfies (H0) and $x\mapsto{\sqrt{\phi(x)}}/{x}$ is non-increasing.
\item[Hypotheses (H1')] $\phi$ satisfies (H0) and $\sqrt{\phi}$ is concave.
\item[Hypotheses (H2)] $\phi$ satisfies (H0) and $x\mapsto{\sqrt{\phi(x)}}/{x}$ is non-decreasing.
\item[Hypotheses (H2')] $\phi$ satisfies (H2) and there exists $\alpha\ge2$ such that $x\mapsto\phi(x)/x^\alpha$ is non-increasing.
\end{description}

The next lemma sums up some properties of $\phi$ under our assumptions. 
\begin{lemme}\label{propphi}
\begin{itemize}
\item  
Under (H0), it holds:
\begin{enumerate}[i.]
\item For all $t\ge1$ and $x\ge0$,
\[
 \phi(tx)\ge t\phi(x).
\]
\item For all $t\ge1$ and $y\ge0$,
\[
\phi^{-1}(ty)\le t \phi^{-1}(y).
\]
\item For all $x\ge0$,
\[  x\phi'(x)\ge \phi(x).\]
\end{enumerate}

\item 
Under (H1), it holds:
\begin{enumerate}[i.]
\item For all $t\ge1$ and $x\ge0$,
\[
t\phi(x)\le \phi(tx) \le t^2\phi(x).
\]
\item For all $t\ge1$ and $y\ge0$,
\[
\sqrt{t} \phi^{-1}(y)\le \phi^{-1}(ty)\le{t} \phi^{-1}(y).
\]
\item For all $x\ge0$,
\[{\phi(x)}\le {x}\phi'(x) \le 2{\phi(x)}{}.\]
\item For all $t\ge1$ and $x\ge0$,
\[
 \phi'(tx) \le 2t\phi'(x).
\]
\end{enumerate}

\item 
Under (H2), it holds:
\begin{enumerate}[i.]
\item For all $t\ge1$ and $x\ge0$,
\[
 \phi(tx)\ge t^2\phi(x).
\]
\item For all $t\ge1$ and $y\ge0$,
\[
\phi^{-1}(ty)\le \sqrt{t} \phi^{-1}(y).
\]
\item For all $x\ge0$,
\[  x\phi'(x)\ge 2\phi(x).\]
\end{enumerate}

\end{itemize}
\end{lemme}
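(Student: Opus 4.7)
The plan is to unify all three clusters of statements under one template: first derive a multiplicative bound on $\phi(tx)$, then invert it to a multiplicative bound on $\phi^{-1}(ty)$, and finally obtain the bounds on $x\phi'(x)/\phi(x)$ either from the convexity inequality at $0$ or from logarithmic differentiation of the assumed monotone ratio.

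Under (H0), the key observation is that $x \mapsto \phi(x)/x$ is non-decreasing on $(0,+\infty)$: this is the classical fact that for a convex function vanishing at $0$, the slope of the chord from the origin is non-decreasing in the endpoint. Comparing this ratio at $x$ and at $tx$ with $t \ge 1$ gives (i). To pass to the inverse, I would set $x = \phi^{-1}(y)$ in (i), obtaining $\phi(t\phi^{-1}(y)) \ge ty$, and then apply the non-decreasing inverse $\phi^{-1}$ (defined on the range of $\phi$) to get (ii). For (iii), the tangent inequality at $x$ applied at $0$ reads $0 = \phi(0) \ge \phi(x) + \phi'(x)(0-x)$.

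Under (H1), the same template runs with the strengthened assumption that $\sqrt{\phi(x)}/x$ is non-increasing. Comparing this ratio at $x$ and $tx$ gives $\sqrt{\phi(tx)} \le t\sqrt{\phi(x)}$, i.e., the upper half of (i); the lower half is inherited from (H0). Inversion as above yields (ii). For (iii), logarithmic differentiation of the non-increasing function $\sqrt{\phi(x)}/x$ gives $\phi'(x)/(2\phi(x)) \le 1/x$, i.e., $x\phi'(x) \le 2\phi(x)$, which combined with (H0)'s $x\phi'(x) \ge \phi(x)$ produces the two-sided inequality. Property (iv) then comes by chaining: $(tx)\phi'(tx) \le 2\phi(tx) \le 2t^2\phi(x) \le 2t^2\, x\phi'(x)$, and dividing by $tx$. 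Under (H2) the signs simply flip: the non-decreasing ratio $\sqrt{\phi(x)}/x$ gives $\phi(tx) \ge t^2\phi(x)$, inversion gives $\phi^{-1}(ty) \le \sqrt{t}\phi^{-1}(y)$, and log-differentiation of the same ratio now yields $x\phi'(x) \ge 2\phi(x)$.

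No step presents a serious obstacle; once the monotone-secant principle and the log-derivative trick are isolated, the whole lemma is bookkeeping. The only point requiring a moment's attention is the passage through $\phi^{-1}$, which is defined only on $\phi(\R^+)$; but since $\phi$ is continuous, non-decreasing and vanishes at $0$, its image is an interval of the form $[0,L)$ or $[0,+\infty)$, and on this interval $\phi^{-1}$ is non-decreasing, which is all that the inversion step uses. The $\mathcal{C}^2$ hypothesis is not really needed in this lemma; $\mathcal{C}^1$ suffices so that $\phi'$ makes sense pointwise.
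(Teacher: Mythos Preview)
The paper states this lemma without proof, treating it as an elementary consequence of the hypotheses on $\phi$. Your proposal is correct and is the natural argument: under (H0) the chord-from-the-origin monotonicity $x\mapsto\phi(x)/x$ nondecreasing gives (i), inversion gives (ii), and the tangent inequality at $0$ gives (iii); under (H1) and (H2) the assumed monotonicity of $\sqrt{\phi(x)}/x$ supplies the complementary scaling bound, inversion transfers it to $\phi^{-1}$, and logarithmic differentiation (valid where $\phi>0$, the inequality being trivial where $\phi=0$) yields the bounds on $x\phi'(x)/\phi(x)$. The chain for (H1)(iv) is correct as written. Nothing is missing.
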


\section{Isoperimetry for the radial measure $\nu_{n,\phi}$} \label{sec:nu}
In order to deal with $\mu_{n,\phi}$, a first step is to establish a similar isoperimetric inequality for its radial marginal.
\begin{theoreme}\label{thm:nu}
There exists a universal constant $C>0$ such that for every $n\in\N^*$,  every $a\in[0,\frac12]$, and every function $\phi$,
\begin{enumerate}[i)]
\item if $\phi$ satisfies (H1) then
\[
 \Is_{\nu_{n,\phi}}(a)\ge C\frac{\sqrt{n}}{\phi^{-1}(n)}\  \phi^{-1}(1)  \frac{a\log\frac{1}{a}}{\phi^{-1}\left(\log\frac{1}{a}\right)}.
\]
\item if $\phi$ satisfies (H2) then
\[
\Is_{\nu_{n,\phi}}(a)\ge C \frac{\sqrt{n}}{\phi^{-1}(n)}\  a\sqrt{\log\frac1a}.
\]
\end{enumerate}
\end{theoreme}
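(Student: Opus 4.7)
The density $f_n(r) := |\sphere^{n-1}|\,Z_{n,\phi}^{-1}\, r^{n-1}e^{-\phi(r)}$ of $\nu_{n,\phi}$ is log-concave on $(0,\infty)$, since $(n-1)\log r - \phi(r)$ is concave. For log-concave probability measures on a real interval, Bobkov's one-dimensional isoperimetric inequality identifies the isoperimetric profile with the density at the quantile and establishes that half-lines are extremal. Thus
\[
\Is_{\nu_{n,\phi}}(a) \;\geq\; c\,\min\!\bigl( f_n(F_n^{-1}(a)),\, f_n(F_n^{-1}(1-a)) \bigr)
\]
for a universal $c>0$, where $F_n$ is the cumulative distribution function. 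The proof reduces to locating the left and right quantiles and estimating $f_n$ there, which is exactly where the estimates of probabilities of balls enter.

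I would first identify the mode $r_\star$ of $f_n$ by $\phi'(r_\star) = (n-1)/r_\star$, and verify via $x\phi'(x)\asymp\phi(x)$ (Lemma \ref{propphi}) that $r_\star \asymp \phi^{-1}(n)$. Since $-(\log f_n)''(r_\star) = (n-1)/r_\star^{2}+\phi''(r_\star)\asymp n/r_\star^{2}$ under the hypotheses, a Laplace evaluation of $\int_0^\infty f_n = 1$ yields
\[
Z_{n,\phi} \;\asymp\; |\sphere^{n-1}|\,r_\star^{n-1}e^{-\phi(r_\star)}\cdot \frac{r_\star}{\sqrt n}.
\]
For the right tail at $t_a := F_n^{-1}(1-a)\geq r_\star$, integration by parts of $\int_{t_a}^\infty f_n$ gives $a \asymp f_n(t_a)/(\phi'(t_a)-(n-1)/t_a)$, whence $\phi(t_a)\asymp\log(1/a)$, $t_a\asymp\phi^{-1}(\log(1/a))$ and
\[
f_n(t_a)\;\asymp\; a\,\phi'(t_a)\;\asymp\; \frac{a\log(1/a)}{\phi^{-1}(\log(1/a))}.
\]
For the left tail at $s_a := F_n^{-1}(a)\leq r_\star$, the elementary bound $\nu_{n,\phi}([0,t]) \leq |\sphere^{n-1}|\,t^n/(n Z_{n,\phi})$ combined with the Laplace formula for $Z_{n,\phi}$ pinpoints $s_a$ and produces a lower bound on $f_n(s_a)$ of order at least that of $f_n(t_a)$.

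The remaining algebra converts the density-at-quantile estimates into the two forms stated in the theorem. Under (H1), $x\phi'(x)\leq 2\phi(x)$ preserves $\phi'(t_a)\asymp\log(1/a)/\phi^{-1}(\log(1/a))$; combining with the Laplace expression for $Z_{n,\phi}$ and the comparison $\sqrt n\,\phi^{-1}(1)/\phi^{-1}(n)\leq 1$ (from Lemma \ref{propphi}) yields the claimed prefactor $\sqrt n\,\phi^{-1}(1)/\phi^{-1}(n)$. Under (H2), $x\phi'(x)\geq 2\phi(x)$ together with the resulting $\phi(x)\geq\phi(1)x^{2}$ lets one bound $\log(1/a)/\phi^{-1}(\log(1/a))$ below by a multiple of $\sqrt{\log(1/a)}$, collapsing the right-tail profile to $a\sqrt{\log(1/a)}$. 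I expect the main obstacle to be the left-tail estimate: because of the polynomial factor $r^{n-1}$ the dimensional dependence of $s_a$ and of $f_n(s_a)$ is more delicate than in the right tail, and verifying that the right-tail profile is the binding one for all $a\in(0,1/2]$ will require a careful case distinction on whether $\phi^{-1}(\log(1/a))$ lies below or above the natural scale $r_\star/\sqrt n$.
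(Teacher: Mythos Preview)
Your route is genuinely different from the paper's: you exploit that $\nu_{n,\phi}$ is one-dimensional log-concave and try to estimate $f_n$ at its quantiles directly. The paper instead applies Bobkov's general log-concave inequality (Theorem~\ref{thm:isop-bobkov}), choosing the ball radius $r$ so that $\log\nu_{n,\phi}\{|x-x_0|\le r\}$ kills the entropy term, and never locates a quantile. It does this in two explicit regimes: for $a\in[e^{-cn},\tfrac12]$ the ball is centered at the mode with width $\asymp r_0\sqrt{\log(1/a)/n}$ and Klartag's concentration (Lemma~\ref{lem:klartag}) controls its mass; for $a\le e^{-cn}$ the ball is $[0,r]$ with $r\asymp\phi^{-1}(K\log(1/a))$ (or $\phi^{-1}(n)\sqrt{K\log(1/a)/n}$ under (H2)) and the tail bound of Lemma~\ref{lem:queue-phi} is used.

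Your sketch misses the bulk half of this dichotomy on the \emph{right} tail. The integration-by-parts relation $a\asymp f_n(t_a)/(\phi'(t_a)-(n-1)/t_a)$ and the conclusion $\phi(t_a)\asymp\log(1/a)$, $t_a\asymp\phi^{-1}(\log(1/a))$ are valid only when $t_a$ is well past $r_\star$, i.e.\ when $a\lesssim e^{-cn}$. For $a\in[e^{-cn},\tfrac12]$ the right quantile $t_a$ lies within $O(r_\star/\sqrt n)$ of $r_\star$, so $\phi(t_a)\asymp n$, not $\log(1/a)$, and the denominator $\phi'(t_a)-(n-1)/t_a$ vanishes at $r_\star$, so your formula degenerates. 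A separate Gaussian-fluctuation argument near the mode is required there---this is exactly what Klartag's lemma provides in the paper---and you have not supplied it; you flag a regime split only for the left tail. A second issue: under (H2) alone there is no upper control on $\phi''$, so your Laplace claim $-(\log f_n)''(r_\star)\asymp n/r_\star^2$ and the resulting formula for $Z_{n,\phi}$ can fail (take e.g.\ $\phi$ growing faster than any power). The paper's use of Klartag's lemma sidesteps this because that lemma needs only log-concavity of $\rho$, not curvature bounds.
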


As $\nu_{n,\phi}$ is a log-concave measure, we can apply the isoperimetric inequality shown by Bobkov in \cite{bobkov-logconcave}.
\begin{theoreme}[Bobkov \cite{bobkov-logconcave}]\label{thm:isop-bobkov}
If $\mu$ is a log-concave measure on $\mathbb{R}^n$, then for all Borel sets $A$, for all $r>0$, and for all $x_0\in\R^n$,
\begin{equation}%
2r\mu^+(\partial A)\ge \mu(A) \log \frac1{\mu(A)}
 + \mu(A^\complement{}) \log \frac1{\mu(A^\complement{})} +
\log \mu\{|x-x_0|\le r\},
 \label{eq:isop-bobkov}
\end{equation}
where $A^\complement{}$ denotes the complement of $A$.
\end{theoreme}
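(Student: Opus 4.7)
The plan is to apply Bobkov's log-concave isoperimetric inequality (Theorem \ref{thm:isop-bobkov}) to $\nu_{n,\phi}$, which is log-concave on $\R^+$ since its density $r^{n-1}e^{-\phi(r)}$ has concave logarithm $g(r) = (n-1)\log r - \phi(r) + \mathrm{const}$. For a Borel set $A$ with $\nu_{n,\phi}(A) = a \in (0,\tfrac12]$ and $h(a) = a\log(1/a) + (1-a)\log(1/(1-a)) \geq \tfrac12 a\log(1/a)$, inequality \eqref{eq:isop-bobkov} reads
\[
2r\,\nu_{n,\phi}^+(\partial A) \geq h(a) + \log \nu_{n,\phi}\bigl([x_0 - r, x_0 + r]\bigr),
\]
and the idea is to center the ball at the mode $r^*$, characterized by $\phi'(r^*)=(n-1)/r^*$, and to choose $r=r(a)$ as large as possible so that $\log\nu_{n,\phi}([r^*-r,r^*+r]) \geq -h(a)/2$. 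This yields $\nu_{n,\phi}^+(\partial A) \geq h(a)/(4r)$, and tuning $r$ in the two cases produces the stated bounds. Using Lemma \ref{propphi}, one gets $r^*$ comparable to $\phi^{-1}(n)$, and the typical concentration scale around $r^*$, set by $|g''(r^*)|^{-1/2}$ with $|g''(r^*)|$ of order $n/\phi^{-1}(n)^2$, matches $\phi^{-1}(n)/\sqrt{n}$, precisely the reciprocal of the dimensional prefactor in the statement.

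The technical heart is a lower bound for $\nu_{n,\phi}([r^*-r,r^*+r])$ drawn from the concavity of $g$, which produces two regimes. In the Gaussian regime $|t| \lesssim r^*$, a Taylor estimate together with the bound $\phi'(tx) \leq 2t\phi'(x)$ from Lemma \ref{propphi} keeps $-g''$ of order $n/(r^*)^2$, giving $g(r^*+t)-g(r^*) \leq -c\,nt^2/\phi^{-1}(n)^2$ and hence $\nu_{n,\phi}(|X-r^*|>t) \leq \exp\bigl(-c n t^2/\phi^{-1}(n)^2\bigr)$. In the far regime $t \gtrsim r^*$, monotonicity of $\phi'$ gives $g'(2r^*) \leq -(n-1)/(2r^*)$, so $g$ decays at least linearly beyond $2r^*$, producing subexponential tails with rate of order $n/\phi^{-1}(n)$ controlled directly by $\phi$.

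Under (H2) the Gaussian regime already covers the needed range: with $r = c_1(\phi^{-1}(n)/\sqrt{n})\sqrt{\log(1/a)}$, the tail is $\leq a^{c_2}$, so $\log\nu_{n,\phi}([r^*-r,r^*+r]) \geq -h(a)/2$, and substitution into \eqref{eq:isop-bobkov} gives case (ii). Under (H1) one chooses the larger radius $r = c_1\phi^{-1}(n)\phi^{-1}(\log(1/a))/(\sqrt{n}\phi^{-1}(1))$; depending on the size of $\log(1/a)$ relative to $n$, this $r$ lies in the Gaussian regime or crosses into the subexponential one, and in both regimes the tail is bounded by $a^{c_2}$ after using the scaling inequalities $\phi^{-1}(ty) \leq t\phi^{-1}(y)$, $\phi^{-1}(ty) \geq \sqrt{t}\phi^{-1}(y)$, and $\phi(tx) \geq t\phi(x)$ of Lemma \ref{propphi} to control the relevant exponent. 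The main obstacle is exactly this case analysis under (H1): producing constants independent of $n$ and $\phi$ requires carefully interleaving the two tail regimes using the full strength of Lemma \ref{propphi}. Once the ball-probability bound is in hand, both cases of Theorem \ref{thm:nu} follow by a direct substitution into \eqref{eq:isop-bobkov}.
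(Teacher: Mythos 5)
Your proposal does not prove the statement it is meant to prove. The statement in question is Bobkov's inequality \eqref{eq:isop-bobkov} itself --- the assertion that \emph{every} log-concave measure $\mu$ on $\R^n$ satisfies
$2r\mu^+(\partial A)\ge \mu(A)\log\frac1{\mu(A)}+\mu(A^\complement)\log\frac1{\mu(A^\complement)}+\log\mu\{|x-x_0|\le r\}$
for all Borel sets $A$, all $r>0$ and all centers $x_0$. Your argument opens by \emph{applying} this very inequality to $\nu_{n,\phi}$ and then spends all of its effort on choosing $r$ and estimating $\nu_{n,\phi}\{|x-x_0|\le r\}$. That is circular with respect to the stated theorem: nothing in your text addresses why a general log-concave measure satisfies \eqref{eq:isop-bobkov}. (For the record, the paper does not prove it either --- it is imported verbatim from Bobkov's 1999 paper \cite{bobkov-logconcave}, where it is established by a localization-type argument reducing to one-dimensional log-concave measures; any genuine proof would have to engage with that reduction or an equivalent mechanism, none of which appears in your sketch.)

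What you have actually sketched is a proof of Theorem \ref{thm:nu}, i.e.\ the isoperimetric inequality for the radial measure, which is the content of Section \ref{sec:nu} of the paper. As a proof of \emph{that} result your outline is broadly reasonable and close in spirit to the paper's: the paper likewise splits into a large-set regime (balls centered at the mode $r_0$, with the concentration estimate supplied by Klartag's Lemma \ref{lem:klartag} rather than by your direct Taylor/concavity analysis of the log-density) and a small-set regime (where it uses the tail bound of Lemma \ref{lem:queue-phi} for balls centered at the origin, not at the mode). But none of this discharges the obligation of proving Theorem \ref{thm:isop-bobkov}, so as an answer to the assigned statement the proposal has a fatal gap: it assumes the conclusion.
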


One chooses $r$ as small as possible but with $\mu\{|x-x_0|\le r\}$ large enough,  such that the sum of the two last terms is non-negative.  This requires explicit estimates of probabilities of balls. In our case, we will use two different estimates valid for two ranges of $r$, leading to inequalities for two ranges of $a$.

The first lemma is due to Klartag \cite{klartag-TCL}. The balls are centered at the maximum of density in order to capture a large fraction of the mass.
\begin{lemme}[Klartag \cite{klartag-TCL}]\label{lem:klartag}
Let $\nu(dr)=r^{n-1}\rho(r)\,dr$ be a probability measure on $\R^+$ with $\rho$ a log-concave function  of class $\mathcal{C}^2$. Let $r_0$ be the point where the density reaches its maximum. Then,  
\[
\forall \delta\in[0,1],\quad
\nu\{|r-r_0|\ge \delta r_0\}\le  C_1  e^{-c_1n\delta^2}
\]
where $C_1>1$ and $0<c_1<1$ are universal constants.
\end{lemme}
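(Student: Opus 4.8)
The plan is to exploit the extra log-concavity coming from the factor $r^{n-1}$. Write $g(r)=r^{n-1}\rho(r)$ for the density of $\nu$; as a product of log-concave functions it is log-concave, and on the interior of its support
\[
(\log g)''(r)=-\frac{n-1}{r^2}+(\log\rho)''(r)\le-\frac{n-1}{r^2}.
\]
First I would dispose of $n=1$, where $\nu\{|r-r_0|\ge\delta r_0\}\le1$ is already dominated by $C_1e^{-c_1\delta^2}$ as soon as $C_1\ge e^{c_1}$, and then assume $n\ge2$. Then $g(0)=0<g(r_0)$ forces $r_0$ into the interior of $\{g>0\}$, so $(\log g)'(r_0)=0$; and if $(1\pm\delta)r_0$ leaves the support, the corresponding tail is empty, so there is no loss in staying inside it.

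Next I would upgrade the curvature bound to a pointwise estimate. Integrating $(\log g)''\le-(n-1)/r^2$ twice from $r_0$ (using $(\log g)'(r_0)=0$ and the substitution $s=ur_0$) gives, for every $t>0$,
\[
G(tr_0):=\log\frac{g(r_0)}{g(tr_0)}\ \ge\ (n-1)\,\psi(t),\qquad \psi(t):=t-1-\log t\ \ge\ 0 .
\]
Since $\psi$ is convex with $\psi(1)=\psi'(1)=0$ and $\psi''(t)=t^{-2}\ge\tfrac14$ for $t\le2$, Taylor's formula gives $\psi(1\pm\delta)\ge\delta^2/8$ for all $\delta\in[0,1]$ (the case $\delta=1$ on the lower side being trivial, as $\psi(0)=+\infty$), hence
\[
G\big((1+\delta)r_0\big)\ \ge\ y\qquad\text{and}\qquad G\big((1-\delta)r_0\big)\ \ge\ y,\qquad y:=\frac{(n-1)\delta^2}{8}.
\]

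The delicate point is the passage from this pointwise bound to a bound on the tail \emph{probability}. One should not write $\nu\{r\ge b\}\le g(r_0)\int_b^\infty e^{-G}$ and then bound $g(r_0)$, because $g(r_0)r_0$ can be of order $\sqrt n$ (it already is for the Gaussian). Instead I would compare the tail mass directly to the mass on $[r_0,b]$, using only convexity of the nonnegative function $G=\log(g(r_0)/g)$, which vanishes at $r_0$. With $b=(1+\delta)r_0$: above $b$, convexity gives $G(r)\ge G(b)+G'(b)(r-b)$ with $G'(b)\ge G(b)/(b-r_0)$, so $\int_b^\infty e^{-G}\le(b-r_0)e^{-G(b)}/G(b)$; on $[r_0,b]$, convexity gives $G(r)\le\frac{r-r_0}{b-r_0}G(b)$, so $\int_{r_0}^b e^{-G}\ge\frac{b-r_0}{G(b)}\big(1-e^{-G(b)}\big)\ge\frac{b-r_0}{1+G(b)}$ using $1-e^{-u}\ge u/(1+u)$. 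Dividing, and using $\int e^{-G}\ge\int_{r_0}^b e^{-G}$,
\[
\nu\{r\ge(1+\delta)r_0\}\ \le\ \frac{(1+G(b))\,e^{-G(b)}}{G(b)}\ \le\ \frac{(1+y)\,e^{-y}}{y},
\]
the last step because $u\mapsto(1+u)e^{-u}/u$ is decreasing on $(0,\infty)$ and $G(b)\ge y$. The symmetric argument on $[(1-\delta)r_0,r_0]$ handles the lower tail, so altogether
\[
\nu\{|r-r_0|\ge\delta r_0\}\ \le\ \frac{2(1+y)\,e^{-y}}{y},\qquad y=\frac{(n-1)\delta^2}{8}\ \ge\ \frac{n\delta^2}{16}.
\]
To finish I would combine this with the trivial bound $\nu\{\cdot\}\le1$: for $y\le2$ one has $n\delta^2\le32$ and hence $1\le3e^{-n\delta^2/32}$, while for $y\ge2$ one has $2(1+y)e^{-y}/y\le3e^{-y}\le3e^{-n\delta^2/32}$; so the claim holds with $C_1=3$ and $c_1=1/32$. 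I expect the main obstacle to be precisely this last conversion: recognizing that $g(r_0)$ must be bypassed and that a purely convexity-based comparison of tail mass to core mass does the job. Everything else is routine once the pointwise bound $G(tr_0)\ge(n-1)\psi(t)$ is in hand.
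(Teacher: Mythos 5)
Your proof is correct, and it supplies something the paper itself does not: Lemma \ref{lem:klartag} is quoted from Klartag \cite{klartag-TCL} without proof, so there is no internal argument to compare against; your write-up is a valid self-contained substitute. Both halves check out. The pointwise estimate $G(tr_0)\ge (n-1)(t-1-\log t)$ follows exactly as you say from $(\log g)''\le -(n-1)/r^2$ together with $(\log g)'(r_0)=0$ (for $n\ge 2$ the maximizer is interior since $g(0)=0$ and $g$ vanishes at infinity, and if $(1\pm\delta)r_0$ leaves the support the corresponding tail has measure zero); this is essentially the same bound $g(tr_0)\le g(r_0)\,(t e^{1-t})^{n-1}$ that underlies Klartag's original argument. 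The more delicate part, converting the pointwise bound into a tail bound, is also sound and is the nicest feature of your proof: bounding $\int_b^\infty e^{-G}$ by the tangent-line estimate $e^{-G(b)}(b-r_0)/G(b)$ (using $G'(b)\ge G(b)/(b-r_0)$ from convexity), bounding the core mass $\int_{r_0}^b e^{-G}$ from below by the chord estimate $(b-r_0)\bigl(1-e^{-G(b)}\bigr)/G(b)\ge (b-r_0)/\bigl(1+G(b)\bigr)$, and taking the quotient makes $g(r_0)$ and the normalization cancel, which correctly sidesteps the obstruction you identify (that $g(r_0)r_0$ may be of order $\sqrt n$). The remaining ingredients -- monotonicity of $u\mapsto (1+u)e^{-u}/u$, the bounds $\psi(1\pm\delta)\ge \delta^2/8$ on $[0,1]$, the reduction $y\ge n\delta^2/16$ for $n\ge 2$, the trivial case $n=1$, and the final split $y\le 2$ versus $y\ge 2$ -- all hold, and they yield the statement with the admissible constants $C_1=3>1$ and $c_1=1/32\in(0,1)$.
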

Bobkov's inequality combined with the latter lemma leads to the following proposition.
\begin{proposition}\label{prop:nu-big}
There exist two universal constants $c>0$ and $C>0$ such that for all functions $\phi$ satisfying (H0) and all $n$ large enough to ensure $e^{-cn}<\frac12$, it holds
\[
\forall a \in\left[e^{-cn},\frac12\right] ,\quad
\Is_{\nu_{n,\phi}} (a) \ge C \frac{\sqrt{n}}{\phi^{-1}(n)}\ a \sqrt{\log\frac{1}{a}}.
\]
\end{proposition}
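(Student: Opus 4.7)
The plan is to apply Bobkov's inequality (Theorem~\ref{thm:isop-bobkov}) to the one-dimensional log-concave measure $\nu_{n,\phi}$, centered at the mode $r_0$ of its density $r\mapsto r^{n-1}e^{-\phi(r)}$, and to use Lemma~\ref{lem:klartag} to show that a ball of suitable radius around $r_0$ carries almost all the mass. The game is then to choose the radius $r$ in \eqref{eq:isop-bobkov} as small as possible while still keeping $\log\nu_{n,\phi}\{|t-r_0|\le r\}$ absorbable by a fixed fraction of $a\log(1/a)$.

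First I would locate the mode. Differentiating $(n-1)\log r-\phi(r)$ shows $r_0\phi'(r_0)=n-1$, and the inequality $x\phi'(x)\ge\phi(x)$ from Lemma~\ref{propphi} (under (H0)) yields $\phi(r_0)\le n$, hence
\[
r_0\le \phi^{-1}(n).
\]
Lemma~\ref{lem:klartag} then gives, for every $\delta\in[0,1]$,
\[
\nu_{n,\phi}\{|t-r_0|\le \delta r_0\}\ge 1-C_1 e^{-c_1 n\delta^2}.
\]
I would choose $\delta^2 = K\log(1/a)/n$ with $K$ a sufficiently large universal constant (e.g.\ $K=2/c_1$), so that $C_1 e^{-c_1 n\delta^2}\le C_1 a^2\le 1/2$ for $a$ in the stated range. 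Using $-\log(1-x)\le 2x$ on $[0,1/2]$, the last term of \eqref{eq:isop-bobkov} is then bounded below by $-\tfrac12 a\log(1/a)$ provided the universal constants are adjusted appropriately.

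With $r=\delta r_0$ and $x_0=r_0$, Bobkov's inequality, after dropping the non-negative $(1-a)\log(1/(1-a))$ contribution, reduces to
\[
2\delta r_0\,\nu_{n,\phi}^+(\partial A)\ge \tfrac12\, a\log(1/a).
\]
Since $\delta r_0\le \sqrt{K}\,\phi^{-1}(n)\sqrt{\log(1/a)/n}$, this rearranges to
\[
\nu_{n,\phi}^+(\partial A)\ge C\,\frac{\sqrt n}{\phi^{-1}(n)}\,a\sqrt{\log(1/a)},
\]
which is precisely the desired bound. The condition $a\ge e^{-cn}$ with $c=1/(2K)$ (or smaller) guarantees simultaneously $\delta\le 1$, so that Klartag's lemma is applicable, and the quantitative absorption described above.

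The main obstacle is really one of careful bookkeeping: tuning the single parameter $\delta$ so that, uniformly in $a\in[e^{-cn},\tfrac12]$, the Klartag tail produces a term dominated by, say, $\tfrac12 a\log(1/a)$, while the resulting radius $\delta r_0$ still delivers the correct factor $\sqrt n/\phi^{-1}(n)$ in the final estimate. The log-concavity of $\nu_{n,\phi}$ needed for Bobkov's inequality is automatic, as $(n-1)\log r$ and $-\phi(r)$ are both concave on $\R^+$, so no further assumption beyond (H0) enters the argument.
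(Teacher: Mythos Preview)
Your proof is correct and follows essentially the same approach as the paper: apply Bobkov's inequality centered at the mode $r_0$, choose $\delta$ proportional to $\sqrt{\log(1/a)/n}$, control the ball probability via Klartag's lemma, and use $r_0\le\phi^{-1}(n)$. The only cosmetic difference is that the paper absorbs the term $\log\nu_{n,\phi}\{|t-r_0|\le\delta r_0\}$ with $(1-a)\log\frac{1}{1-a}$ via a concavity argument rather than with a fraction of $a\log(1/a)$; note also that your illustrative choice $K=2/c_1$ may not be large enough (since one only knows $C_1>1$), but your hedge ``adjusted appropriately'' covers this.
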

\begin{proof}
Let $C_1$ and $c_1$ be the constants given by Lemma \ref{lem:klartag}.
Let $K>0$ and set \[\delta=\sqrt{\frac{K\log\frac1a}{c_1n}}.\]
Choose $a\in\left[\exp\left({-\frac{c_1n}K}\right),\frac12\right]$ and  $K>\frac{\log C_1}{\log2}$. It follows that $\delta\le1$ and $1 - C_1 a^{K}>0$. Then Lemma \ref{lem:klartag} implies
\begin{equation}\label{eq:nu1}
(1-a) \log \frac1{1-a} +
\log \nu_{n,\phi}\{|r-r_0|\le \delta r_0\}\ge 
(1-a) \log \frac1{1-a} + \log (1 - C_1 a^{K}).
\end{equation}
The right-hand term of \eqref{eq:nu1} cancels at 0 and is concave in $a$ on $[0,\frac12]$ if $K\ge1$. Take $K$ large enough such that it is also non-negative at $\frac12$. Thus, by concavity, it is non-negative on $[0,\frac12]$. So Bobkov's formula \eqref{eq:isop-bobkov} yields
\[
\Is_{\nu_{n,\phi}} (a) \ge \frac12
\sqrt{\frac{c_1n}{Kr_0^2}}\ a \sqrt{\log\frac{1}{a}}.
\]
It remains to estimate the point $r_0$ where the density of $\nu_{n,\phi}$ reaches its maximum. The differentiation of the density leads to 
\[
r_0\phi'(r_0)=n-1.
\]
By Lemma \ref{propphi}, $\phi(r_0)\le n-1$. Thus
\[
r_0\le \phi^{-1}(n).
\]
\end{proof}
Let us remark that under (H1),  for all $a\le\frac12$,
\[
\sqrt{\log\frac{1}{a}}\ge \sqrt{\log 2}\ \phi^{-1}(1) \frac{\log\frac{1}{a}}{\phi^{-1}\left(\log\frac{1}{a}\right)}.
\]
So the latter proposition implies a stronger inequality that the one required under (H1), but only for large enough sets.

To cope with smaller sets, we need another estimate for balls with greater radius.
\begin{lemme}\label{lem:queue-phi}
Let $\phi$ be a function satisfying (H0) and $n\in\N^*$. Then for all $r\ge \phi^{-1}(2n)$,
\[
\nu_{n,\phi}\{(r,+\infty)\}\leq F_{n,\phi}(r)=\left( \frac{er}{\phi^{-1}(n)}\right)^n e^{-{\phi(r)}}\le1.
\]
\end{lemme}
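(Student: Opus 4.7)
The approach is to bound the tail probability directly by treating the numerator and the denominator of
\[
\nu_{n,\phi}\{(r,+\infty)\}=\frac{\int_r^\infty s^{n-1}e^{-\phi(s)}\,ds}{\int_0^\infty s^{n-1}e^{-\phi(s)}\,ds}
\]
separately. First I would lower bound the normalizing integral by restricting to $[0,\phi^{-1}(n)]$, where $\phi(s)\le n$, which gives the easy estimate
\[
\int_0^\infty s^{n-1}e^{-\phi(s)}\,ds\ge e^{-n}\int_0^{\phi^{-1}(n)} s^{n-1}\,ds=\frac{\phi^{-1}(n)^n}{n\,e^n}.
\]

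For the numerator I would rescale via $s=tr$ with $t\ge1$ and invoke the convexity property from Lemma \ref{propphi} under (H0), $\phi(tr)\ge t\phi(r)$, to dominate the integrand by $r^n t^{n-1}e^{-t\phi(r)}$. A second change of variable $u=t\phi(r)$ then reduces the numerator to an upper incomplete Gamma function:
\[
\int_r^\infty s^{n-1}e^{-\phi(s)}\,ds \le \frac{r^n}{\phi(r)^n}\int_{\phi(r)}^\infty u^{n-1}e^{-u}\,du.
\]

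The quantitative heart of the proof, and the one place where the hypothesis $r\ge\phi^{-1}(2n)$ really enters, is the Gamma tail estimate for $\phi(r)\ge 2n$. My plan is to note that $u\mapsto u^{n-1}e^{-u/2}$ attains its maximum at $u=2(n-1)$ and is therefore non-increasing on $[\phi(r),\infty)\subseteq[2n,\infty)$; splitting $e^{-u}=e^{-u/2}\cdot e^{-u/2}$ and pulling out the monotone factor yields
\[
\int_{\phi(r)}^\infty u^{n-1}e^{-u}\,du\le \phi(r)^{n-1}e^{-\phi(r)/2}\int_{\phi(r)}^\infty e^{-u/2}\,du\le 2\phi(r)^{n-1}e^{-\phi(r)}.
\]
Assembling the three estimates produces an extra factor $2n/\phi(r)\le 1$ that is absorbed thanks to $\phi(r)\ge 2n$, leaving exactly $(er/\phi^{-1}(n))^n e^{-\phi(r)}$.

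For the final assertion $F_{n,\phi}(r)\le 1$, I would write $t=r/\phi^{-1}(n)\ge 1$ (valid since $r\ge\phi^{-1}(2n)\ge\phi^{-1}(n)$), use Lemma \ref{propphi} (H0)i again in the form $\phi(r)=\phi(t\,\phi^{-1}(n))\ge tn$, and observe that taking logs reduces the claim to the elementary scalar inequality $1+\log t\le t$ on $t\ge 1$. No real obstacle arises beyond choosing the right Gamma tail bound at the threshold $2n$.
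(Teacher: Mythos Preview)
Your argument is correct. The overall structure---lower bounding the normalizing integral on $[0,\phi^{-1}(n)]$ and upper bounding the tail integral separately---coincides with the paper's, and your treatment of the denominator is identical to the paper's.

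Where you differ is in bounding the numerator $\int_r^\infty s^{n-1}e^{-\phi(s)}\,ds$. The paper uses integration by parts against $\phi'(t)e^{-\phi(t)}$, drops the (nonnegative) $\phi''$ term, and then uses $t\phi'(t)\ge\phi(t)\ge 2(n-1)$ on the range of integration to absorb the remaining integral into the left-hand side; this yields directly $\int_r^\infty t^{n-1}e^{-\phi(t)}\,dt\le 2r^{n-1}e^{-\phi(r)}/\phi'(r)\le r^n e^{-\phi(r)}/n$. Your route instead rescales $s=tr$ and uses $\phi(tr)\ge t\phi(r)$ to reduce to an incomplete Gamma tail, then applies the ``split the exponential'' trick $e^{-u}=e^{-u/2}e^{-u/2}$ with monotonicity of $u^{n-1}e^{-u/2}$ beyond $2(n-1)$. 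Both land on the same bound $r^n e^{-\phi(r)}/n$ and both use the hypothesis $\phi(r)\ge 2n$ at exactly one place. Your approach has the mild advantage of not requiring $\phi\in\mathcal C^2$ (only convexity through $\phi(tx)\ge t\phi(x)$), while the paper's integration by parts is perhaps more transparent about where the factor $r^{n-1}e^{-\phi(r)}$ originates.

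For the final claim $F_{n,\phi}(r)\le 1$, the paper argues that $F_{n,\phi}$ is non-increasing on $[\phi^{-1}(n),\infty)$ and equals $1$ at $r=\phi^{-1}(n)$; your logarithmic reduction to $1+\log t\le t$ via $\phi(r)\ge tn$ is an equivalent and slightly more explicit version of the same computation.
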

Note that this tail bound gives estimates of probability of balls centered at 0 for $\nu_{n, \phi}$, but also for $\mu_{n, \phi}$ since 
\[
\nu_{n,\phi}\{(r,+\infty)\} = \mu_{n,\phi}\{|x|\ge r\}.
\]
This lemma can thereby be used to derive isoperimetric inequalities from Bobkov's formula for both measures.
\begin{proof}
The main tool is integration by part.
\begin{align*}
\int_r^{+\infty} t^{n-1} e^{-\phi(t)}\,dt
&=\int_r^{+\infty} \frac{t^{n-1}}{\phi'(t)}\ \phi'(t)e^{-\phi(t)}\,dt\\
&= \frac{r^{n-1}}{\phi'(r)}e^{-\phi(r)} + \int_r^{+\infty} 
\left[\frac{n-1}{t\phi'(t)}-\frac{\phi''(t)}{\big(\phi'(t)\big)^2}\right]\ {t^{n-1}}e^{-\phi(t)}\,dt\\
&\le \frac{r^{n-1}}{\phi'(r)}e^{-\phi(r)} + \int_r^{+\infty} 
\frac{n-1}{t\phi'(t)}\ {t^{n-1}}e^{-\phi(t)}\,dt.\\
\end{align*}
If $t\ge r\ge\phi^{-1}(2n)\ge \phi^{-1}\big(2(n-1)\big)$, then $t\phi'(t)\ge 2 (n-1)$. So the last integral in the above inequality is less than $\frac12 \int_r^{+\infty} t^{n-1} e^{-\phi(t)}\,dt$. Moreover $r\phi'(r)\ge 2 n$. Hence
\begin{equation*}
\int_r^{+\infty} t^{n-1} e^{-\phi(t)}\,dt \le 2 \frac{r^{n-1}}{\phi'(r)}e^{-\phi(r)}\le \frac{r^{n}}{n}e^{-\phi(r)}.
\end{equation*}

It remains to deal with the normalization constant which makes $\nu_{n,\phi}$ a probability measure:
\begin{align*}
\int_0^{+\infty} n t^{n-1} e^{-\phi(t)}\,dt &\ge
\int_0^{\phi^{-1}(n)} n t^{n-1} e^{-\phi(t)}\,dt\\
&\ge e^{-n}\int_0^{\phi^{-1}(n)} n t^{n-1} \,dt = \left(\frac{\phi^{-1}(n)}{e}\right)^n.
\end{align*}

Putting all together, we get the desired bound on the tail of $\nu_{n,\phi}$:
\[
\nu_{n,\phi}\{(r,+\infty)\}=\frac{\int_r^{+\infty} t^{n-1} e^{-\phi(t)}\,dt}{\int_0^{+\infty}  t^{n-1} e^{-\phi(t)}\,dt}
\le \left( \frac{er}{\phi^{-1}(n)}\right)^n e^{-{\phi(r)}}.
\] 
Then one can show that the bound is non-increasing for $r\ge\phi^{-1}(n)$ and is equal to 1 for $r=\phi^{-1}(n)$.
\end{proof}
Then, we show  an isoperimetric inequality simultaneously for $\mu_{n,\phi}$ and $\nu_{n,\phi}$ in the range of small sets. 
\begin{proposition}
\label{prop:nu-small-h0}
For every $c>0$, there exists $C>0$  such that for all functions $\phi$ satisfying the hypotheses of (H0),
\[
 \forall a\in\left[0, 
e^
{-c n}\wedge\frac12
\right],\quad
 \Is_{\mu}(a)\ge C \ \frac{a\log\frac1a}{\phi^{-1}\left(\log\frac1a\right)},
\]
where $\mu$ stands for $\mu_{n,\phi}$ or $\nu_{n,\phi}$.
\end{proposition}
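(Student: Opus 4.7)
\medskip

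\noindent\textbf{Proof plan.}
The plan is to apply Bobkov's isoperimetric inequality (Theorem~\ref{thm:isop-bobkov}) centered at $x_0=0$, treating $\mu_{n,\phi}$ and $\nu_{n,\phi}$ simultaneously. Both are log-concave, and in either case the centered ball probability equals $\nu_{n,\phi}([0,r])$, so the tail bound of Lemma~\ref{lem:queue-phi} applies uniformly to both measures.

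Given $A$ with $\mu(A)=a\in(0,e^{-cn}\wedge\frac12]$, I would set $r=\phi^{-1}\!\left(M\log\frac1a\right)$ for some large $M=M(c)$ to be fixed. The hypothesis $a\le e^{-cn}$ gives $\log\frac1a\ge cn$, so picking $M\ge 2/c$ forces $r\ge\phi^{-1}(2n)$ and Lemma~\ref{lem:queue-phi} becomes available. Applying item~i.\ of Lemma~\ref{propphi} under (H0) with $t=r/\phi^{-1}(n)\ge1$ yields $r/\phi^{-1}(n)\le\phi(r)/n=M\log\frac1a/n$, so, writing $x=\log\frac1a/n\ge c$, the tail estimate reads
\[
\nu_{n,\phi}\{(r,+\infty)\}\le(eMx)^n a^M=\exp\bigl(n(\log(eMx)-Mx)\bigr).
\]
A direct calculus check shows that, for $M$ large enough depending only on $c$, one has $\log(eMx)-Mx\le-2x$ throughout $[c,+\infty)$. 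This gives $\mu\{|x|\le r\}\ge 1-a^2$.

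Feeding this back into Bobkov's inequality and using $(1-a)\log\frac1{1-a}\ge 0$ together with $\log(1-a^2)\ge-2a^2$ (valid for $a\le\frac12$), I would obtain
\[
2r\,\mu^+(\partial A)\ge a\log\tfrac1a-2a^2.
\]
For $a$ below a universal threshold (say $a\le\frac14$), this yields $2r\mu^+(\partial A)\ge\frac12 a\log\frac1a$; the residual compact range $a\in[\frac14,e^{-cn}\wedge\frac12]$, on which $a\log\frac1a/\phi^{-1}(\log\frac1a)$ is bounded, is absorbed into the final constant $C=C(c)$. Finally, item~ii.\ of Lemma~\ref{propphi} under (H0) gives $r=\phi^{-1}(M\log\frac1a)\le M\phi^{-1}(\log\frac1a)$, so
\[
\mu^+(\partial A)\ge\frac{a\log\frac1a}{4M\,\phi^{-1}(\log\frac1a)},
\]
which is the stated inequality with $C=1/(4M)$. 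The main obstacle lies in the tail step: one must choose $M=M(c)$ large enough that the polynomial prefactor $(er/\phi^{-1}(n))^n$ in Lemma~\ref{lem:queue-phi} is beaten by the exponential gain $e^{-\phi(r)}=a^M$. The assumption $a\le e^{-cn}$ is used precisely at this point, as it provides the lower bound $\log\frac1a/n\ge c$ that keeps the prefactor under control.
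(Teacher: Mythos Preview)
Your approach is essentially the paper's: apply Bobkov's inequality \eqref{eq:isop-bobkov} with $x_0=0$, choose $r=\phi^{-1}\!\bigl(M\log\frac1a\bigr)$ for a large constant $M=M(c)$, control the tail via Lemma~\ref{lem:queue-phi}, and finish with $r\le M\,\phi^{-1}(\log\frac1a)$ from Lemma~\ref{propphi}. The tail step is carried out cleanly, and your calculus check that $\log(eMx)-Mx\le-2x$ for $x\ge c$ and $M$ large is correct.

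There is, however, a genuine gap in the endgame. After discarding the term $(1-a)\log\frac1{1-a}$ from \eqref{eq:isop-bobkov} you obtain $2r\,\mu^+(\partial A)\ge a\log\frac1a-2a^2$, and this right-hand side becomes \emph{negative} for $a$ close to $\frac12$ (e.g.\ at $a=\frac12$ it equals $\frac12(\log2-1)<0$). Your proposed cure---``the residual compact range $a\in[\frac14,e^{-cn}\wedge\frac12]$ is absorbed into $C(c)$''---does not work: you have produced no nontrivial lower bound on $\Is_\mu(a)$ there, and the target $a\log\frac1a/\phi^{-1}(\log\frac1a)$ is \emph{not} bounded uniformly over all $\phi$ satisfying (H0) (think of $\phi(x)=\lambda x$ with $\lambda\to\infty$; both sides scale by $\lambda$, so one really needs the full argument). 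Note also that this residual range is nonempty whenever $cn<\log4$, which for small $c$ covers many values of $n$.

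The fix is painless and is exactly what the paper does: keep the term $(1-a)\log\frac1{1-a}$. With your tail estimate $\mu\{|x|\le r\}\ge1-a^2$, one checks directly that
\[
(1-a)\log\tfrac1{1-a}+\log(1-a^2)=a\log(1-a)+\log(1+a)\ge0\quad\text{on }[0,\tfrac12],
\]
so Bobkov's inequality gives $2r\,\mu^+(\partial A)\ge a\log\frac1a$ on the \emph{entire} range $(0,e^{-cn}\wedge\frac12]$, with no residual interval to handle. The paper proceeds the same way (it arranges $F_{n,\phi}(r)\le a/2$ rather than $a^2$, and uses the elementary bounds $(1-x)\log\frac1{1-x}\ge x\log2$ and $\log(1-x)\ge-2x\log2$ on $[0,\frac12]$ to reach the same conclusion).
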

Note that this is worth showing the result for every $c>0$. Indeed, to prove Theorem \ref{thm:nu}, we combine this result with Proposition \ref{prop:nu-big} where this constant is already fixed but unknown.
\begin{proof}
As before, we start from \eqref{eq:isop-bobkov} and set 
$r(a)=\phi^{-1}\left(K\log\frac1a\right)$, where $K$ is a constant large enough to ensure
\begin{equation}\label{Kphi2}
Kc\ge 2,
\end{equation}
\begin{equation}\label{Kphi1}
K-1\ge \frac1c,
\end{equation}
\begin{equation}\label{Kphi3}
eKc \exp\left(-{(K-1)c}\right)\le \frac12.
\end{equation}
 By Lemma \ref{propphi}, $r\le K\phi^{-1}\left(\log\frac1a\right)$, as $K>1$. So the result is deduced from Bobkov's inequality \eqref{eq:isop-bobkov} provided that
\begin{equation}\label{eq:h0-pos}
(1-a)\log\frac1{1-a} + \log \mu_{n,\alpha}\{|x|\le r\} 
\ge 0.
\end{equation}
Now, by concavity,
\[
\forall x\in\left[0,\frac12\right], \quad (1-x)\log\frac1{1-x}\ge \log2\ x,
\quad \text{and} \quad \log(1-x) \ge - 2\log2\ x.
\]
So, for all $a\in[0,\frac12]$,  
\begin{equation*}
(1-a)\log\frac1{1-a} + \log \mu_{n,\alpha}\{|x|\le r\} 
\ge \log2\Big( a - 2 F_{n,\phi}(r)\Big)
\ge 0,
\end{equation*}
as soon as 
\[
r\ge\phi^{-1}(2n)
\quad \text{ and } \quad
 F_{n,\phi}(r)\le \frac{a}{2}.
 \]

Assume that $a\le \exp({-c n})\wedge\frac12$. Then $r\ge\phi^{-1}(Kcn)\ge \phi^{-1}(2n)$ by \eqref{Kphi2}.   Let us define the function $G$  by
\[
G(a)=\frac{ F_{n,\phi}\big(r(a)\big)}{a}.
\]
Then \eqref{eq:h0-pos} holds  as soon as $G(a)\le \frac12$.
To handle this, it is easier to look on $G$ as a function of $r$. We know that $a=\exp\left(-\frac{\phi(r)}{K}\right)$. So
\[
G(a)= \left(\frac{er}{\phi^{-1}(n)}\right)^n\exp\left(-{\phi(r)}\Big(1-\frac1K\Big)\right).
\]
This function is non-increasing in $r$ when 
\[r\phi'(r)\ge \frac{n}{1-\frac1K}.\]
This is the case if $r\ge \phi^{-1}\left(\frac{Kn}{K-1}\right)$. Moreover $\phi^{-1}(Kcn)\ge\phi^{-1}\left(\frac{Kn}{K-1}\right)$ by \eqref{Kphi1}. Thus,  when $a\le \exp({-c n})$,
\[
G(a)\le G\big(\exp({-c n})\big) \le \Big[eKc \exp\big(-{(K-1)c}\big)\Big]^n\le \frac1{2^n}\le\frac12.
\]
\end{proof}
Under (H1), this result is again stronger than the one required since then
\[
1\ge \frac{\sqrt{n}}{\phi^{-1}(n)}\phi^{-1}(1).
\]
We could also derive the required inequality under (H2), but with $\sqrt{n}/\phi^{-1}(cn)$ instead of $\sqrt{n}/\phi^{-1}(n)$. So we prefer to prove it directly, following the above proof.

\begin{proposition}
\label{prop:nu-small-h2}
For every $c>0$, there exists $C>0$  such that for all functions $\phi$ satisfying (H2),
\[
 \forall a\in\left[0, 
e^
{-c n}\wedge\frac12
\right],\quad
 \Is_{\mu}(a)\ge C\frac{\sqrt{n}}{\phi^{-1}(n)} \ a\sqrt{\log\frac1a},
\]
where $\mu$ stands for $\mu_{n,\phi}$ or $\nu_{n,\phi}$.
\end{proposition}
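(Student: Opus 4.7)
The plan is to mimic the proof of Proposition \ref{prop:nu-small-h0} almost verbatim, only replacing the final estimate on $r(a)$ by a sharper one available under (H2). Given $c>0$, I would pick the same radius $r(a)=\phi^{-1}\bigl(K\log\frac{1}{a}\bigr)$ and the same constant $K$ (depending only on $c$) satisfying \eqref{Kphi2}, \eqref{Kphi1}, \eqref{Kphi3}. Note that \eqref{Kphi2} already forces $Kc\ge 2\ge 1$, which is all I will need for the new ingredient below.

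The first half of the proof, namely the verification of
\[
(1-a)\log\frac{1}{1-a}+\log\mu\{|x|\le r(a)\}\ge 0
\quad\text{for all } a\in\bigl[0,e^{-cn}\wedge\tfrac12\bigr],
\]
relies only on hypothesis (H0), on Lemma \ref{lem:queue-phi} and on the concavity inequalities for $(1-a)\log\frac{1}{1-a}$ and $\log(1-x)$. It carries over word for word; both $\mu_{n,\phi}$ and $\nu_{n,\phi}$ are handled at once because Lemma \ref{lem:queue-phi} controls their common tail $\nu_{n,\phi}\{(r,+\infty)\}=\mu_{n,\phi}\{|x|\ge r\}$. Once this non-negativity is in hand, Bobkov's inequality \eqref{eq:isop-bobkov} reduces to
\[
\Is_\mu(a)\ge \frac{a\log\frac{1}{a}}{2\,r(a)}.
\]

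The new step is to bound $r(a)$ by exploiting (H2). By part ii. of Lemma \ref{propphi} for (H2), $\phi^{-1}(ty)\le\sqrt{t}\,\phi^{-1}(y)$ for all $t\ge 1$. Writing $K\log\frac{1}{a}=\bigl(K\log\frac{1}{a}/n\bigr)\cdot n$ and using $a\le e^{-cn}$ together with $Kc\ge 1$, the ratio $K\log\frac{1}{a}/n$ is indeed at least $1$, so
\[
r(a)=\phi^{-1}\!\left(K\log\frac{1}{a}\right)\le \sqrt{\frac{K\log\frac{1}{a}}{n}}\;\phi^{-1}(n).
\]
Injecting this into the Bobkov bound immediately yields
\[
\Is_\mu(a)\ge \frac{1}{2\sqrt{K}}\;\frac{\sqrt{n}}{\phi^{-1}(n)}\;a\sqrt{\log\frac{1}{a}},
\]
which is the desired inequality with $C=1/(2\sqrt{K})$.

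There is no serious obstacle: the only danger is a clash between the constraints on $K$ inherited from the (H0)-proof and the new one ($Kc\ge 1$), but the latter is already implied by \eqref{Kphi2}. All the bookkeeping depends solely on $c$, so $K$, and hence $C$, can be chosen universally given $c$, exactly as required by the statement.
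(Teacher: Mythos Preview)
Your argument is correct. The verification of \eqref{eq:h0-pos} from Proposition~\ref{prop:nu-small-h0} indeed uses only (H0) and Lemma~\ref{lem:queue-phi}, so it transfers unchanged; and once you have $\Is_\mu(a)\ge \dfrac{a\log\frac1a}{2r(a)}$ with $r(a)=\phi^{-1}(K\log\frac1a)$, the (H2)-estimate $\phi^{-1}(ty)\le\sqrt{t}\,\phi^{-1}(y)$ applied with $y=n$ and $t=K\log\frac1a/n\ge Kc\ge 2$ gives exactly the stated bound with $C=1/(2\sqrt{K})$.

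The paper takes a genuinely different route. Instead of recycling the radius $r(a)=\phi^{-1}(K\log\frac1a)$ from Proposition~\ref{prop:nu-small-h0}, it starts afresh with the ``target'' radius $r(a)=\sqrt{K(\phi^{-1}(n))^2\log\frac1a/n}$ and then has to redo the whole verification that $G(a)=F_{n,\phi}(r(a))/a\le\frac12$. That monotonicity-in-$r$ argument and the final evaluation both lean on (H2), whereas in your approach (H2) enters only once, at the very last line. Your route is therefore shorter and re-uses more of the existing work; it also explains why the author's cautionary remark just after Proposition~\ref{prop:nu-small-h0} (that one would only get $\sqrt{n}/\phi^{-1}(cn)$) is overly pessimistic: by going back to the \emph{proof} rather than the \emph{statement} of that proposition, the factor $K$ in the argument of $\phi^{-1}$ guarantees $K\log\frac1a\ge n$, so the comparison can be made directly against $\phi^{-1}(n)$.
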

\begin{proof}
We set 
\begin{equation}\label{eq:r=a}
r(a)=\sqrt{\frac{K\big(\phi^{-1}(n)\big)^2}{n}\log\frac1a},
\end{equation}
where $K$ is a constant large enough to verify
\begin{equation*}\label{Kphi22}
Kc\ge 2,
\end{equation*}
\begin{equation*}\label{Kphi12}
K-1\ge \frac1{2c},
\end{equation*}
\begin{equation*}\label{Kphi32}
e\sqrt{Kc} \exp\left(-{(K-1)c}\right)\le \frac12.
\end{equation*}
Assume that $a\le \exp({-c n})\wedge\frac12$, then 
\[r\ge\sqrt{Kc}\ \phi^{-1}(n)\ge \sqrt{\frac{Kc}{2}} \phi^{-1}(2n)\ge\phi^{-1}(2n).\] 
So we can use the estimate from Lemma \ref{lem:queue-phi}. Consider as before
\[
G(a)=\frac{ F_{n,\phi}\big(r(a)\big)}{a}.
\]
Then, as explained in the proof of Proposition \ref{prop:nu-small-h0},  Bobkov's formula \eqref{eq:isop-bobkov} yields the required isoperimetric inequality  as soon as 
\[G(a)\le \frac12.\]
From \eqref{eq:r=a}, we deduce 
\[a=\exp\left(-\frac{nr^2}{K\big(\phi^{-1}(n)\big)^2}\right).\]
 So if we express $G$ as a function of $r$,
\[
G(a)= \left(\frac{er}{\phi^{-1}(n)}\right)^n\exp\left(-{\phi(r)}+\frac{nr^2}{K\big(\phi^{-1}(n)\big)^2}\right).
\]
The derivative $\partial_r G^{\frac1n}$ is of the same sign as
\[
1+\frac{2r^2}{K\big(\phi^{-1}(n)\big)^2}-\frac{r\phi'(r)}n.
\]
Under hypothesis (H2), $r\phi'(r)\ge2\phi(r)\ge2 n \left({r}/{\phi^{-1}(n)}\right)^2$ as soon as $r\ge\phi^{-1}(n)$. Thus, when $r\ge \sqrt{Kc}\ \phi^{-1}(n)$,
\[
1+\frac{2r^2}{K\big(\phi^{-1}(n)\big)^2}-\frac{r\phi'(r)}n
\le
1+\frac{2r^2}{\big(\phi^{-1}(n)\big)^2}\left(\frac1K -1\right)
\le 1+2Kc\left(\frac1K -1\right)\le0,
\]
since $\frac1K -1<0$.
So $G$ is non-increasing in $r$
 when $r\ge \sqrt{Kc}\ \phi^{-1}(n)$, and  for all $a\le \exp({-c n})$, it holds
\[
G(a)\le \left(e\sqrt{Kc}\right)^n \exp\bigg(cn-{\phi\Big(\sqrt{Kc}\ \phi^{-1}(n)\Big)}\bigg)
 \le \left[e\sqrt{Kc} \exp\left(-{(K-1)c}\right)\right]^n\le \frac12.
\]
We have again used Hypothesis (H2) which ensures $\phi\Big(\sqrt{Kc}\ \phi^{-1}(n)\Big)\ge Kcn$.
\end{proof}

Combining  Proposition \ref{prop:nu-big} for big sets, and Proposition \ref{prop:nu-small-h0} or Proposition \ref{prop:nu-small-h2} for small sets yields Theorem \ref{thm:nu}.

\section{Tensorization and cut-off argument}
\label{sec:tensor}
We derive the isoperimetric inequality for $\mu_{n,\alpha}$ by tensorization from the ones for the radial measure and the uniform probability measure on the sphere, following the idea of the proof by Bobkov of Theorem \ref{thm:bobkov-poinc}. For that purpose, we need a functional version of our isoperimetric inequality. In \cite{bobkov-isop-uniform} and  \cite{Barthe-spher}, the authors give conditions so that isoperimetric inequalities translate into functional inequalities. Actually this works in our setting as explained in Section \ref{sec:func}.

Let $\kappa>0$. Let $J:[0,1]\to\R^+$ be a continuous convex function symmetric with respect to $1/2$, with J(0)=J(1)=0, and such that the following property holds : for any measure $\mu$ on $\R^d$ and constant $C\ge0$, if
\[
\Is_\mu\ge C J,
\]
then for all smooth functions $f:\R^d\to[0,1]$,
\[
\kappa J\left(\int f\,d\mu\right)\leq 
\int J(f)\,d\mu + \frac1{C} \int |\nabla f|\,d\mu.
\]

\begin{remarque}
Ideally, one would expect $\kappa=1$. For instance the latter inequality implies the former one and is tight for constant functions only in the case $\kappa=1$. However this does not matter here as we tensorize only once.
\end{remarque}
For such profiles $J$, we can show the following proposition.
\begin{proposition}\label{thm:tensor}
Let $\mu$ be a measure on $\R^n$ with radial measure $\nu$. Assume that there exists positive constants $C_\nu$ and $C_{\sigma_{n-1}}$ such that
\[ \Is_\nu \ge C_\nu J \quad\text{ and }\quad \Is_{\sigma_{n-1}}\ge C_{\sigma_{n-1}}J.
\]
There exist $\kappa_1, \kappa_2>0$ depending only on $\kappa$ such that, for every $n\in\N^*$, for every $r_2>r_1>0$ and $a$ such that
\begin{equation}\label{cond-r}
  r_2-r_1 \ge \frac1{C_{\nu}J({\textstyle \frac12})},
\end{equation}
\begin{equation}\label{cond-a}
\kappa_1\,\nu\{[r_1 ,+\infty)\}\le a \le \frac12,
\end{equation}
it holds
\[
\Is_{\mu}(a)\ge \kappa_2 \min \Big({C_{\nu}},\frac{C_{\sigma_{n-1}}}{r_2}\Big)\ 
J(a).
\]
\end{proposition}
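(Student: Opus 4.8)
The strategy is to use the polar factorization $X = r\theta$ with $r \sim \nu$ and $\theta \sim \sigma_{n-1}$ independent, and to bound $\Is_\mu$ from below by exploiting the functional form of the isoperimetric inequality for both factors. Given a smooth $f : \R^n \to [0,1]$, I would first split the domain of integration using the radius: introduce a smooth cut-off $\chi$ that is $1$ on the ball of radius $r_1$ and $0$ outside the ball of radius $r_2$, with $|\nabla \chi| \le 1/(r_2-r_1)$. The point of the cut-off is that on $\{|x|\le r_2\}$ the sphere of radius $|x|$ has its isoperimetric constant comparable to $C_{\sigma_{n-1}}/r_2$ (rescaling $\sigma_{n-1}$ from $\sphere^{n-1}$ to the sphere of radius $|x|\le r_2$ costs a factor $1/|x| \ge 1/r_2$), while the remaining mass outside radius $r_1$ is controlled by $\nu\{[r_1,+\infty)\}$, which by \eqref{cond-a} is at most $a/\kappa_1$ and hence negligible.

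Next I would feed $g := \chi f$ into the tensorized functional inequality. For a fixed radius $r \le r_2$, applying the hypothesis $\Is_{\sigma_{n-1}} \ge C_{\sigma_{n-1}} J$ (rescaled to the sphere $r\sphere^{n-1}$, giving constant $\ge C_{\sigma_{n-1}}/r_2$) to $\theta \mapsto g(r\theta)$ gives
\[
\kappa\, J\!\left(\int g(r\theta)\,d\sigma_{n-1}(\theta)\right) \le \int J\big(g(r\theta)\big)\,d\sigma_{n-1}(\theta) + \frac{r_2}{C_{\sigma_{n-1}}} \int |\nabla_\theta g(r\theta)|\,d\sigma_{n-1}(\theta).
\]
Writing $G(r) := \int g(r\theta)\,d\sigma_{n-1}(\theta)$ and integrating against $\nu$, then applying $\Is_\nu \ge C_\nu J$ to the one-dimensional function $G$ on $[0,+\infty)$ (after checking $G$ can be taken with values in $[0,1]$ and is a legitimate test function, using \eqref{cond-r} to control the behaviour at the cut-off scale so that $J(\tfrac12)$ enters correctly), I would get
\[
\kappa\, J\!\left(\int G\,d\nu\right) \le \int J(G)\,d\nu + \frac1{C_\nu}\int |G'|\,d\nu.
\]
Chaining the two and using convexity of $J$ (Jensen, to pull $J$ inside the $\sigma_{n-1}$-average in the first term of the $\nu$-integral) together with the elementary bound $|G'(r)| \le \int(|\partial_r g| + |\nabla_\theta g|/r)\,d\sigma_{n-1}$, the error terms assemble into a constant multiple of $\int |\nabla g|\,d\mu$, while the main term is $\kappa^2 J(\int g\,d\mu)$ up to the convexity losses, all controlled by $\min(C_\nu, C_{\sigma_{n-1}}/r_2)$.

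Finally I would undo the cut-off: $\int g\,d\mu = \int f\,d\mu - \int (1-\chi)f\,d\mu$, and since $(1-\chi)$ is supported on $\{|x|\ge r_1\}$ which has $\mu$-measure $\nu\{[r_1,+\infty)\} \le a/\kappa_1$, choosing $\kappa_1$ large (depending only on $\kappa$ and the convexity of $J$, using $J$ Lipschitz-like near $0$ in the sense that $J(a) \gtrsim J(a/2)$ by concavity of $J/{\rm id}$ — here I would instead exploit $J$ being convex with $J(0)=0$, so $J(a - t) \ge J(a) - J'(a)t$ is the wrong direction; more carefully one uses that for $a \le 1/2$, subtracting a quantity $\le a/\kappa_1$ changes $J$ by at most a controlled fraction of $J(a)$ since $J$ is monotone on $[0,1/2]$ and $J(a/2) \ge \tfrac12 J(a)$ by convexity from $J(0)=0$), one shows $J(\int g\,d\mu) \ge \tfrac12 J(a)$ when $\int f\,d\mu = a$. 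Also $\int |\nabla g|\,d\mu \le \int |\nabla f|\,d\mu + \frac1{r_2-r_1}\mu\{|x|\ge r_1\} \le \int|\nabla f|\,d\mu + C_\nu J(\tfrac12)\cdot a/\kappa_1$ by \eqref{cond-r}, and this extra term is again absorbed. Taking the infimum over $f$ with $\int f\,d\mu = a$ and invoking the equivalence (stated in Section \ref{sec:func}) between the functional inequality with profile $J$ and the isoperimetric inequality $\Is_\mu \ge \text{const}\cdot J$ yields the claim with $\kappa_2$ depending only on $\kappa$.

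The main obstacle I anticipate is the bookkeeping of the cut-off: ensuring that the test function $G$ (an average of $\chi f$ over spheres) is admissible for the one-dimensional functional inequality, that its range stays in $[0,1]$, and that condition \eqref{cond-r} is exactly what is needed so that the "gradient" term $\frac1{C_\nu}\int|G'|\,d\nu$ coming from the radial direction does not swamp the main term — this is where the interplay $r_2 - r_1 \ge 1/(C_\nu J(\tfrac12))$ and the threshold $a \ge \kappa_1 \nu\{[r_1,\infty)\}$ must be balanced. The sphere-rescaling factor $1/r_2$ is straightforward but must be tracked consistently so that it is the quantity $\min(C_\nu, C_{\sigma_{n-1}}/r_2)$, not something worse, that appears at the end.
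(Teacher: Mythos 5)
Your skeleton is the same as the paper's: chain the two functional inequalities in polar coordinates (you do it in the reverse order --- sphere at fixed radius, then $\nu$ applied to the spherical average $G$; the paper applies the sphere inequality to the radial average $F(\theta)=\int f(r\theta)\,d\nu(r)$ and then $\nu$ pointwise in $\theta$ --- the order is immaterial), then use the Lipschitz radial cut-off with slope $1/(r_2-r_1)$ to trade the unbounded factor $|x|$ for $r_2$, with \eqref{cond-r} and \eqref{cond-a} controlling the cut-off errors. The genuine gap is the term $\int J(\chi f)\,d\mu$ on the right-hand side of your chained inequality: you never say how it is handled, and your closing move --- ``take the infimum over $f$ with $\int f\,d\mu=a$ and invoke the equivalence between the functional inequality with profile $J$ and $\Is_\mu\ge \mathrm{const}\cdot J$'' --- does not apply, because what you have derived is not a clean functional inequality for $\mu$ (it concerns $\chi f$ and carries cut-off errors), and for a generic $f$ with mean $a$ (e.g.\ $f\equiv a$, whose gradient term vanishes) the term $\int J(\chi f)\,d\mu$ is of the same order as the left-hand side, so the infimum yields nothing. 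The passage to isoperimetry must be done by hand, as in the paper: take $f_\varepsilon=1$ on a closed set $A$ with $\mu(A)=a$ and $f_\varepsilon=0$ off $A_\varepsilon$; then $\chi f_\varepsilon\in(0,1)$ only on $(A_\varepsilon\setminus A)\cup\big(A\cap\{r_1<|x|<r_2\}\big)$, so in the limit $\int J(\chi f_\varepsilon)\,d\mu\le J(\tfrac12)\,\nu([r_1,r_2])\le J(\tfrac12)\,a/\kappa_1$ --- this is a second, essential use of \eqref{cond-a} --- and this residual must be absorbed into $J(a)$ via the linear lower bound $J(a)\ge 2J(\tfrac12)\,a$ on $[0,\tfrac12]$, choosing $\kappa_1$ large in terms of $\kappa$. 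This absorption step is exactly the bookkeeping you flagged as the anticipated obstacle, and it is missing.

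Two smaller points. First, $J$ is concave, not convex (the word ``convex'' in the setup is a typo: a nonnegative convex function vanishing at $0$ and $1$ is identically zero, and the proof uses concavity throughout). Hence your appeal to Jensen ``to pull $J$ inside the $\sigma_{n-1}$-average'' goes the wrong way and is in any case unnecessary: the sphere functional inequality at fixed radius, which you already wrote down, is what bounds $J(G(r))$ by $\int J(\chi f(r\theta))\,d\sigma_{n-1}(\theta)$ plus a gradient term; and the inequalities you actually need, $J((1-t)a)\ge(1-t)J(a)$, $J(a/2)\ge\tfrac12 J(a)$ and $J(a)\ge 2J(\tfrac12)a$, are consequences of concavity together with $J(0)=0$. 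Second, the radial derivative of the spherical average satisfies simply $|G'(r)|\le\int|\partial_r(\chi f)(r\theta)|\,d\sigma_{n-1}(\theta)$; the extra term $|\nabla_\theta g|/r$ you allow would, if actually kept, produce $\tfrac1{C_\nu}\int|\nabla_\theta g|/|x|\,d\mu$, which is not controlled near the origin, so it should be dropped.
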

\begin{proof}
Let $f:\R^n\to[0,1]$ be a smooth function. We recall some facts on radial and spherical differentiation. If we define $g$ on $\mathbb{R^+}\times\sphere^{n-1}$ by $g(r,\theta)=f(r\theta)$, then the partial derivatives of $g$ can be computed as follows:
\begin{align*}
\partial_r g& = \scal{\nabla f}{\theta}, \\
\nabla_{\theta} g &= r\, \Pi_{\theta^\perp}(\nabla f),
\end{align*}
where $\Pi_{\theta^\perp}$ is the orthogonal projection on ${\theta^\perp}$. Hence,
\begin{align*}
\nabla f &= \partial_r g \,\theta + \frac1r \nabla_{\theta} g ,\\
|\nabla f|^2 &= |\partial_r g|^2 + \frac1{r^2} |\nabla_{\theta} g|^2.
\end{align*}

First,  we apply the functional inequality for $\sigma_{n-1}$ to
the function $F$ defined on $\sphere^{n-1}$ by
\[
F(\theta)= \int f(r\theta)\,d\nu(r).
\]
As $\int F \,d\sigma_{n-1}= \int f \,d\mu$, this yields
\[
\kappa J\left(\int f \,d\mu \right)
\le \int  J(F)\,d\sigma_{n-1}
+ \frac1{C_{\sigma_{n-1}}} \int |\nabla_{\sphere^{n-1}}F|\,d\sigma_{n-1}.
\]
On one hand,
\[
\nabla_{\sphere^{n-1}}F(\theta)= \int r\, \Pi_{\theta^\perp}(\nabla f)(r\theta)\,d\nu(r).
\]
On the other hand, we can use the inequality for $\nu$ to bound $J(F)$. Indeed, 
for all $\theta\in\sphere^{n-1}$,
\[
\kappa J\big(F(\theta)\big)\leq \int J\big(f(r\theta)\big) \,d\nu(r) + \frac1{C_{\nu}} \int|\partial_r f(r\theta)|\,d\nu(r).
\]
Putting all together, 
\begin{multline}\label{ineqf}
\kappa^2 J\left(\int f \,d\mu \right)
\le \int J(f) \,d\mu\\
+ \frac1{C_{\nu}} \int|\partial_r f|\,d\mu
+ \frac \kappa{C_{\sigma_{n-1}}} \int|x|\,\left|\Pi_{\theta^\perp}(\nabla f)\right|\,d\mu(x).
\end{multline}
%
%

We would like to get $|x|$ out of the last integral. As it is not bounded, we use a cut-off argument similar to the one in Sodin's article \cite{sasha-lp}, while simpler in our case. Heuristically, we use the fact that on ``a set of large measure'', $|x|$ is almost constant, close to its expectation for instance. 
Let us introduce a cut-off function $h(r\theta)=h_1(r)$ with 
\[
h_1=
\left\{
\begin{array}{>{\displaystyle}l@{}l}
1 &\text{ on }[0,r_1) \\
\frac{r_2 - r}{r_2-r_1} &\text{ on }[r_1 ,r_2]\\
0 &\text{ on }(r_2,+\infty)
\end{array}
\right.
\] 
with $0<r_1<r_2$ to be chosen later (typically of the same order as $\esp_{\mu}|X|$).  It holds
\[
\nabla(fh)=h\nabla f + f\nabla h,
\]
thus
\begin{align*}
|\partial_r (fh)|&\le |\partial_r f| + ||f||_\infty |\partial_r h|,
\\
{\text{\LARGE\strut}}|\Pi_{\theta^\perp}\big(\nabla (fh)\big)| &\le h\, |\Pi_{\theta^\perp}(\nabla f)|.
\end{align*}
As $h=0$ if $|x|>r_2$, 
\[
\int|x|\,\left|\Pi_{\theta^\perp}\big(\nabla (fh)\big)\right|\,d\mu(x)\le
r_2\int\left|\Pi_{\theta^\perp}(\nabla f)\right|\,d\mu(x).
\]
Besides, we can bound the derivative of $h$ so that
\[
\int|\partial_r h|\,d\mu\le \frac{\nu\big([r_1,r_2]\big)}{r_2-r_1}.
\]
Finally, Inequality \eqref{ineqf} applied to $fh$ yields
\begin{align}
\lefteqn{
\kappa^2 J\left(\int fh \,d\mu \right)
- \int J(fh) \,d\mu 
-   \frac{||f||_\infty\nu\big([r_1,r_2]\big)}{C_{\nu}(r_2-r_1)}
}
\phantom{\text{un peu plus à droite, encore}}\nonumber
\\
&\le
\max\Big(\frac1{C_{\nu}},\frac{\kappa r_2}{C_{\sigma_{n-1}}}\Big)
\left(
\int |\partial_rf| +\left|\Pi_{\theta^\perp}(\nabla f)\right|\,d\mu
\right)
\nonumber\\
&\le 
\sqrt{2}\max\Big(\frac1{C_{\nu}},\frac{\kappa r_2}{C_{\sigma_{n-1}}}\Big)
\int |\nabla f| \,d\mu.\label{ineq-fh}
\end{align}
Hence we have almost the functional inequality for $f$ and $\mu$ with an additional term that we expect to be negligible. 
It is easier to look at functions approximating characteristic functions to go back from $fh$ to $f$ in the left hand term.

Let $A\subset\R^n$ be a closed set of measure $a\le \frac12$. Let $K>0$ and $t\in(0,1)$ constants to be chosen later.
Assume the following constraints on $r_1$, $r_2$, and $a$:\begin{equation*}
C_{\nu} (r_2-r_1) \ge K,
\end{equation*}
\begin{equation*}
\nu\{[r_1 ,+\infty)\} \le t a.
\end{equation*} 
Then it holds
\begin{align*}
\mu\{\indicatrice{A} h=1\}
&\ge\mu\big(A\setminus \{h<1\}\big)\ge (1-t)a,\\
\mu\{\indicatrice{A} h>0\}
& \le a\le\frac12.
\end{align*}
As $J$ is non-decreasing on $(0,\frac12)$, concave, and $J(0)=0$,
\[
J\left(\int \indicatrice{A} h \,d\mu \right)\ge J\big((1-t)a\big)\ge (1-t) J(a).
\]
Besides $J$ cancels at 0 and 1, and reaches its maximum at $\frac12$, so
\begin{align*}
\int J(\indicatrice{A} h) \,d\mu 
&\le J({\textstyle \frac12}) \ 
\mu\{0<\indicatrice{A}h<1\}\\
&\le J({\textstyle \frac12})\Big(\mu\{\indicatrice{A} h>0\}-\mu\{\indicatrice{A} h=1\}\Big)\\
&\le J({\textstyle \frac12})\, ta.
\end{align*}
 As for the third term of \eqref{ineq-fh}, it is bounded by
\begin{equation*}
\frac{\nu\big([r_1,r_2]\big)}{C_{\nu}(r_2-r_1)}\le \frac {ta}{K}.
\end{equation*}
For $\varepsilon>0$, we approximate $\indicatrice{A}$ by a smooth function $f_\varepsilon:\R^n\to[0,1]$
with  $f_\varepsilon=1$ on $A$ and  $f_\varepsilon=0$ outside $A_\varepsilon$. Then we apply \eqref{ineq-fh} to $f_\varepsilon$ and let $\varepsilon$ to 0,  taking advantage of the continuity of $J$:
\begin{equation*}
\sqrt{2}\max\Big(\frac1{C_{\nu}},\frac{\kappa r_2}{C_{\sigma_{n-1}}}\Big)
\mu^+(\partial A)\ge
\kappa^2(1-t) J(a)
- \left(J({\textstyle \frac12})+\frac1K\right)ta.
\end{equation*}
 Now by concavity,
 $J(a)\ge 2 J(\frac12) a$ on $\big[0,\frac12\big]$. Hence
\begin{align*}
\sqrt{2}\max\Big(\frac1{C_{\nu}},\frac{\kappa r_2}{C_{\sigma_{n-1}}}\Big)
\mu^+(\partial A)&\ge
\bigg(\kappa^2(1-t)- \frac{J(\frac12)+\frac1K}{2J(\frac12)}t \bigg) J(a)\\
&= \bigg(\kappa^2-t\Big(\kappa^2 +\frac12 +\frac1{2KJ(\frac12)}\Big) \bigg) J(a).
\end{align*}  
Taking for instance $K=\big(J(\frac12)\big)^{-1}$ and $t={\kappa^2}/({2(\kappa^2+1)})$ yields a non-trivial result.

Note that looking at closed sets was not a real restriction. Indeed, if $\liminf_{\varepsilon\to0^+} \mu(A_\varepsilon)-\mu(A) >0$ then $\mu^+(\partial A) =+\infty$.
\end{proof}

\section{Getting functional inequalities}\label{sec:func}

To apply Proposition \ref{thm:tensor} to our case, we need to know how to pass from an isoperimetric inequality to a functional inequality. 
 Actually we can approximate $L_\phi$ by an other profile satisfying the hypotheses made in Section \ref{sec:tensor},  assuming  furthermore that $\sqrt{\phi}$ is concave.

This new profile appears to be the isoperimetric function $\Is_{\mu_{1,\phi}}$  of $\mu_{1,\phi}$, denoted by $I_\phi$ henceforth. 
\begin{lemme}\label{lem:LI}
There exist universal constants $d_1>0$ and $d_2>0$ such that for all $\phi$ satisfying (H1'),
\[
d_1I_\phi\le L_\phi\le d_2 I_\phi.
\]
\end{lemme}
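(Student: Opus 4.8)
The plan is to compare both functions $L_\phi(a)=\dfrac{a\log\frac1a}{\phi^{-1}(\log\frac1a)}$ and $I_\phi=\Is_{\mu_{1,\phi}}$ to explicit elementary expressions near $a=0$ and near $a=\frac12$, and then glue the two regimes by compactness. Since both $L_\phi$ and $I_\phi$ vanish at $0$ and $1$, are (up to constants) comparable to $a$ near the endpoints, and are continuous and positive on $(0,1)$, it suffices to prove the two-sided bound $d_1 I_\phi(a)\le L_\phi(a)\le d_2 I_\phi(a)$ for $a$ in a fixed neighbourhood of $0$ (and, by the symmetry of both profiles about $\frac12$, this also covers a neighbourhood of $1$); on the remaining compact subinterval of $(0,1)$ the ratio $L_\phi/I_\phi$ is a continuous positive function, but one must check the bounds there are \emph{uniform in $\phi$}, which I address below.

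First I would get a handle on $I_\phi$. The measure $\mu_{1,\phi}$ on $\R$ has log-concave even density $\propto e^{-\phi(|t|)}$, so its isoperimetric function is realised by half-lines: $I_\phi(a)=\rho_\phi(F_\phi^{-1}(a))$ where $F_\phi$ is the c.d.f. and $\rho_\phi$ the density. Equivalently, writing $a=\mu_{1,\phi}\{t\ge s\}$ for $s$ large, one has $I_\phi(a)=\rho_\phi(s)$. The standard tail estimate for such densities — the same integration-by-parts argument as in the proof of Lemma \ref{lem:queue-phi}, using $s\phi'(s)\ge\phi(s)$ from Lemma \ref{propphi} — gives, for $s$ bounded below,
\[
\frac{e^{-\phi(s)}}{\phi'(s)}\ \lesssim\ \mu_{1,\phi}\{t\ge s\}\ \lesssim\ \frac{e^{-\phi(s)}}{\phi'(s)},
\]
with universal implied constants (here (H1') is what makes the upper bound clean, since it controls $\phi''/(\phi')^2$). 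Hence $\log\frac1a=\phi(s)+O(\log\phi'(s))$, and using (H1'), which by Lemma \ref{propphi} gives $\phi(s)\le s\phi'(s)\le 2\phi(s)$ and $s\sim\phi^{-1}(\phi(s))$, one deduces $\phi(s)\asymp\log\frac1a$ and therefore $s\asymp\phi^{-1}(\log\frac1a)$ and $\phi'(s)\asymp\phi(s)/s\asymp\log\frac1a\,/\,\phi^{-1}(\log\frac1a)$. Putting these together,
\[
I_\phi(a)=\rho_\phi(s)\asymp a\,\phi'(s)\asymp \frac{a\log\frac1a}{\phi^{-1}(\log\frac1a)}=L_\phi(a),
\]
which is exactly the desired two-sided bound, with universal constants, on $(0,a_0]$ for some fixed small $a_0$; by symmetry it holds on $[1-a_0,1)$ as well.

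The remaining point is uniformity in $\phi$ on $[a_0,1-a_0]$. Here I would argue directly rather than by a non-uniform compactness statement: on $[a_0,\frac12]$ one has $L_\phi(a)\asymp L_\phi(a_0)$ times a universal factor only if $\phi^{-1}(\log\frac1a)$ is controlled, which (H1') again provides via Lemma \ref{propphi} (i.e. $\phi^{-1}$ is between square-root-type and linear growth, so $\phi^{-1}(\log\frac1a)/\phi^{-1}(\log\frac1{a_0})$ lies in a fixed compact subinterval of $(0,\infty)$ for $a\in[a_0,\frac12]$). Similarly $I_\phi$ on $[a_0,\frac12]$ is comparable, with universal constants, to $I_\phi(a_0)$: the upper bound $I_\phi\le I_\phi(\frac12)$ is automatic by concavity and for the lower bound one uses $I_\phi(a)\ge 2a\,I_\phi(\tfrac12)\ge a_0\,I_\phi(\tfrac12)$ together with a universal lower bound on $I_\phi(\tfrac12)$ coming from the same tail estimate at a fixed quantile. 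Matching these at $a=a_0$ with the small-$a$ asymptotics closes the argument. I expect the main obstacle to be precisely this uniformity: making sure every ``$\asymp$'' above has constants independent of $\phi$, which is where hypothesis (H1') is used repeatedly (through parts i.–iv. of the (H1) block of Lemma \ref{propphi}) to pin down the growth of $\phi$, $\phi'$ and $\phi^{-1}$ on the relevant ranges.
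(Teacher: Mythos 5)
Your route is essentially the paper's: the half-line representation $I_\phi=f_\phi\circ G_\phi^{-1}$ for the even log-concave measure $\mu_{1,\phi}$, the integration-by-parts tail estimate $G_\phi(s)\asymp e^{-\phi(s)}/\bigl(Z_\phi\phi'(s)\bigr)$, the inequality $\phi(x)\le x\phi'(x)\le 2\phi(x)$ from Lemma \ref{propphi} to convert $\phi'$ at the quantile into $\log\frac1a/\phi^{-1}\bigl(\log\frac1a\bigr)$, an estimate of $Z_\phi$, and a gluing at a fixed quantile to treat $a$ near $\frac12$ (the paper glues via the monotonicity of $L_\phi$ on $[0,\frac12]$, you via the concavity of $I_\phi$; these are interchangeable). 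However, there is a concrete gap at exactly the point you identify as the main obstacle: uniformity in $\phi$. Hypothesis (H1') and the growth estimates of Lemma \ref{propphi} only constrain the \emph{shape} of $\phi$ (they compare $\phi(tx)$ with $\phi(x)$); they do not fix its \emph{scale}. If $\phi$ satisfies (H1') then so does $\phi_\lambda(x)=\phi(\lambda x)$ for every $\lambda>0$, and $Z_{\phi_\lambda}=Z_\phi/\lambda$, $I_{\phi_\lambda}(\frac12)=\lambda/Z_\phi$. Hence there is no universal lower bound on $I_\phi(\frac12)$, and likewise your claims that the tail estimate holds ``for $s$ bounded below'' with universal constants, that $\log\frac1a=\phi(s)+O(\log\phi'(s))$ forces $\phi(s)\asymp\log\frac1a$ on a \emph{fixed} interval $(0,a_0]$, and that $a_0$ can be taken a fixed small number, are all false as stated when ranging over every admissible $\phi$: the error term $\log\bigl(Z_\phi\phi'(s)\bigr)$ is dominated by $\phi(s)$ only for $s\ge\phi^{-1}(1)$, and the quantile $G_\phi\bigl(\phi^{-1}(1)\bigr)$, not an arbitrary $a_0$, marks where your small-$a$ regime begins.

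The repair is exactly the first line of the paper's proof, which your argument is missing: since $L_{\phi_\lambda}=\lambda L_\phi$ and $I_{\phi_\lambda}=\lambda I_\phi$, the statement of Lemma \ref{lem:LI} is scale-covariant and one may normalize $\phi(1)=1$. After this reduction one gets $1\le\phi'(1)\le2$, $t^2\le\phi(t)\le t$ on $[0,1]$ and $t\le\phi(t)\le t^2$ on $[1,+\infty)$, whence $2(1-e^{-1})\le Z_\phi\le 2(1+e^{-1})$, the two-sided tail bound holds for all $s\ge1$ with universal constants, and $G_\phi(1)$ is bounded below by a universal positive constant, so your threshold $a_0$ can indeed be chosen universally; with that, your chain $\phi(s)\asymp\log\frac1a$, $s\asymp\phi^{-1}\bigl(\log\frac1a\bigr)$, $\phi'(s)\asymp\log\frac1a/\phi^{-1}\bigl(\log\frac1a\bigr)$, and your middle-range comparison $L_\phi(a)\asymp L_\phi(a_0)$, $I_\phi(a)\asymp I_\phi(a_0)$ on $[a_0,\frac12]$ all become legitimate and close the proof. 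One further small correction: in the integration by parts it is the \emph{lower} bound on the tail that uses (H1'), via $\phi''\le(\phi')^2/(2\phi)$ and hence $\int_s^{+\infty}\frac{\phi''}{(\phi')^2}e^{-\phi}\le\frac{e^{-\phi(s)}}{2\phi(s)\phi'(s)}$; the upper bound $\int_s^{+\infty}e^{-\phi}\le e^{-\phi(s)}/\phi'(s)$ needs only convexity, so you have the roles of the two bounds reversed.
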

The second inequality is a consequence of Proposition 2.3 from \cite{milman-sodin} by Milman and Sodin, up to the uniform estimation of the normalizing constant of $\mu_{1\phi}$. However we give a self-contained proof of Lemma \ref{lem:LI} at the end of this section for completeness.
In the next lemma, $I_\phi$ is shown to satisfy the required properties.
\begin{lemme}\label{lem:Iphi-func}
Let $\phi$ satisfying (H1'). \begin{enumerate}[i)]
\item\label{propIphi} The function $I_\phi$ is continuous and concave on $[0,1]$, symmetric with respect to $1/2$, and $I_\phi(0)=I_\phi(1)=0$. 
\item\label{acceptable} Let $\mu$ be a measure on $\R^d$ and $C\ge0$. 
If 
\[
\Is_\mu\ge C I_\phi,
\]
then for all smooth functions $f:\R^d\to[0,1]$,
\[
\kappa I_\phi\left(\int f\,d\mu\right)\leq 
\int I_\phi(f)\,d\mu + \frac1{C} \int |\nabla f|\,d\mu,
\]
where $\kappa>0$ is a universal constant.
\end{enumerate}
\end{lemme}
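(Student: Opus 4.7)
For part (\ref{propIphi}), the plan is to exploit the one-dimensional nature of $\mu_{1,\phi}$. The density of $\mu_{1,\phi}$ on $\R$ is symmetric about $0$ and log-concave, since the map $x\mapsto\phi(|x|)$ is convex on $\R$ (this uses that $\phi$ is convex, non-decreasing on $\R^+$, and vanishes at $0$). By Bobkov's one-dimensional theorem, isoperimetric minimizers for symmetric log-concave measures on $\R$ are half-lines, which yields the explicit formula
\[
I_\phi(a) = \rho\bigl(F^{-1}(a)\bigr), \qquad a\in[0,1/2],
\]
where $\rho$ denotes the density and $F$ the CDF of $\mu_{1,\phi}$. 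Symmetry of $I_\phi$ about $1/2$ then follows from the symmetry of $\rho$ about $0$, while continuity and $I_\phi(0)=I_\phi(1)=0$ are immediate from the fact that $\rho$ decays to $0$ at $\pm\infty$. For concavity on $[0,1/2]$, I would differentiate, obtaining $I_\phi'(a) = (\log\rho)'(F^{-1}(a))$, which is non-increasing in $a$ because $(\log\rho)'$ is non-increasing on $\R$ (by log-concavity) and $F^{-1}$ is increasing; concavity on $[1/2,1]$ follows by symmetry, and the concavity across the point $1/2$ is consistent because $(\log\rho)'(0)=0$ by symmetry of $\rho$.

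For part (\ref{acceptable}), the plan is to reduce to the general principle, due to Bobkov \cite{bobkov-isop-uniform} and Barthe \cite{Barthe-spher}, which converts an isoperimetric inequality $\Is_\mu\ge CJ$ into a functional inequality of exactly the form stated, whenever $J$ is continuous, concave, symmetric about $1/2$, and vanishes at both endpoints. Since part (\ref{propIphi}) has verified precisely these hypotheses for $J = I_\phi$, the functional inequality follows directly with a universal constant $\kappa>0$ depending only on the shape of $J$ through these four abstract properties. Concretely, the mechanism is: given smooth $f:\R^d\to[0,1]$ and $F(t)=\mu\{f>t\}$, the co-area formula together with $\Is_\mu\ge C I_\phi$ gives
\[
\int|\nabla f|\,d\mu = \int_0^1 \mu^+(\partial\{f>t\})\,dt \ge C\int_0^1 I_\phi(F(t))\,dt,
\]
and one then establishes a deterministic comparison, valid for every non-increasing $F:[0,1]\to[0,1]$, of the form
\[
\kappa\, I_\phi\!\left(\int_0^1 F(t)\,dt\right) \le \int_0^1 I_\phi(F(t))\,dt + \int_0^1 I_\phi(F(t))\,dt \text{ suitably reorganized},
\]
the two terms on the right being identified, respectively, with $\int I_\phi(f)\,d\mu$ (via the layer-cake formula together with the symmetry of $I_\phi$ about $1/2$) and with the co-area bound above. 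The argument relies only on concavity, symmetry, and the boundary vanishing of $I_\phi$.

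The main obstacle I anticipate is the deterministic comparison step: one must handle the fact that $J$ is only concave (not smooth), so one cannot differentiate freely, and the contributions of $\{f>1/2\}$ and $\{f\le 1/2\}$ must be separated using the symmetry $J(a)=J(1-a)$. The standard trick is to assume $\int f\,d\mu\le 1/2$ (replacing $f$ by $1-f$ if needed, which does not change $|\nabla f|$ or $J(f)$), then to use subadditivity of $J$ on $[0,1/2]$ (which holds for concave $J$ with $J(0)=0$) to reduce the problem to a bound on the distribution tail $F$. The smallness of $\kappa$ reflects the two invocations of the concavity-based estimates (one for the layer-cake comparison, one for the truncation at $1/2$), as the remark right before the lemma already anticipates.
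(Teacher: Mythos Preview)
Your treatment of part~\ref{propIphi}) is correct and essentially what the paper does (the paper just says the properties are ``clearly satisfied'' from the formula $I_\phi=f_\phi\circ F_\phi^{-1}$).

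Part~\ref{acceptable}) has a genuine gap. You invoke a ``general principle'' from \cite{bobkov-isop-uniform,Barthe-spher} to the effect that the four abstract properties of $J$ (continuous, concave, symmetric, vanishing at the endpoints) suffice to pass from $\Is_\mu\ge CJ$ to the stated functional inequality with a \emph{universal} $\kappa$. No such result is available in that generality, and the paper does not claim it: in Section~\ref{sec:tensor} the functional property is imposed as an additional hypothesis on $J$, and Lemma~\ref{lem:Iphi-func} is precisely where this hypothesis is \emph{verified} for $J=I_\phi$. The paper's verification is not abstract. It first uses Barthe's transfer principle to reduce to $\mu=\mu_{1,\phi}$ with $C=1$, then applies the two-dimensional isoperimetry of $\mu_{1,\phi}^{\otimes2}$ to the subgraph $\{(x,y):y\le F_\phi^{-1}(f(x))\}$ to obtain
\[
\Is_{\mu_{1,\phi}^{\otimes 2}}\!\left(\int f\,d\mu_{1,\phi}\right)\le \int I_\phi(f)\,d\mu_{1,\phi}+\int |f'|\,d\mu_{1,\phi}.
\]
The whole difficulty is then concentrated in showing $\Is_{\mu_{1,\phi}^{\otimes 2}}\ge\kappa I_\phi$ with $\kappa$ \emph{uniform over all $\phi$ satisfying (H1')}. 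This is Lemma~\ref{lem:tensorIphi}, and it is not a formality: the paper appeals to the Beckner-inequality machinery of \cite{BCR-isop} or to E.~Milman's tensorization criterion \cite{EMilmanConvFunctIsopIneq}, in either case fed by the quantitative comparison $I_\phi\simeq L_\phi$ of Lemma~\ref{lem:LI}, precisely to secure uniformity in~$\phi$.

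Your co-area sketch does not close this gap. The identification of $\int I_\phi(f)\,d\mu$ with $\int_0^1 I_\phi(F(t))\,dt$ is wrong (for $f=\indicatrice{A}$ the former vanishes while the latter equals $I_\phi(\mu(A))$), and the ``deterministic comparison'' you flag as the main obstacle is never carried out. Even if such a comparison held for each fixed $\phi$, controlling the constant uniformly over the whole class (H1') is exactly the content of Lemma~\ref{lem:tensorIphi}, which your outline bypasses.
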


\begin{proof}
Let us first remark that $\mu_{1,\phi}$ is an even log-concave probability measure on the real line. Hence half-lines solve the isoperimetric problem and we can express explicitly $I_\phi$ (see {e.g.} \cite{bobkov-extr-logconc}). Let $f_\phi:x\mapsto \frac{e^{-\phi(|x|)}}{Z_\phi}$ be the density of $\mu_{1,\phi}$, $F_\phi(x)=\mu_{1,\phi}\big\{(-\infty,x)\big\}$ its cumulative distribution function,  and  $G_\phi(x)=\mu_{1,\phi}\big\{(x,+\infty)\big\}$. Then 
\[
I_\phi=f_\phi\circ F_\phi^{-1}= f_\phi\circ G_\phi^{-1}
\]
and the properties stated in \ref{propIphi}) are clearly satisfied.
Besides the transfer principle emphasized by Barthe in \cite{barthe-level} holds : if $\Is_\mu\ge cI_\phi$ then $\mu$ satisfies  essentially the same  functional inequalities as $\mu_{1,\phi}$. 
 As a consequence, it remains to establish that for all smooth functions $f:\R\to[0,1]$,
\[
\kappa I_\phi\left(\int f\,d\mu_{1,\phi}\right)\leq 
\int I_\phi(f)\,d\mu_{1,\phi} +  \int | f'|\,d\mu_{1,\phi}.
\]
Now, applying the 2-dimensional isoperimetric inequality to the set
\[
\left\{(x,y)\in\R^2; y\le F_\phi^{-1}(f(x))\right\},
\]
one can show (see e.g. \cite{Barthe-spher}) that
\[
\Is_{{\mu_{1,\phi}}^{\otimes2}}\left(\int f\,d\mu_{1,\phi}\right)\leq 
\int I_\phi(f)\,d\mu_{1,\phi} +  \int | f'|\,d\mu_{1,\phi}.
\]
So, \ref{acceptable}) is shown if there exists a  universal $\kappa>0$  such that
\[
\Is_{{\mu_{1,\phi}}^{\otimes2}}\ge \kappa I_\phi.
\]
Actually,  a stronger dimension-free inequality holds and is stated in the next lemma.
\begin{lemme} \label{lem:tensorIphi} There exists $\kappa>0$ such that for all $\phi$ satisfying (H1') and all $n$,
\[
\Is_{{\mu_{1,\phi}}^{\otimes n}}\ge \kappa I_\phi.
\]
\end{lemme}
Barthe, Roberto, and Cattiaux prove it in \cite{BCR-isop} without verifying the universality of $\kappa$. However one can check that their constant can be uniformly controlled  for every $\phi$ satisfying (H1'), by using the same estimates for $G_\phi$ and $Z_\phi$ as in the proof of Proposition Lemma \ref{lem:LI}. Indeed,  a Beckner inequality  is shown to hold with a constant uniform in $\phi$, thanks to their explicit bound. It tensorizes and implies a super-Poincaré with a constant uniform in $n$ and $\phi$, which translates into the isoperimetric inequality of Lemma \ref{lem:tensorIphi}.

One can also check the simple criterion given by E. Milman in \cite{EMilmanConvFunctIsopIneq} for a tensorization result. As the function defined by
\[
t\mapsto \frac{L_\phi(t)}{L_2(t)}=\frac{\sqrt{\log\frac1t}}{\phi^{-1}\left(\log\frac1t\right)}
\]
is non-decreasing under (H1) --- all the more under (H1') --- then by Lemma \ref{lem:LI} there exists a universal constant $D>0$ such that
\[\forall 0<t\le s\le\frac12,\quad \frac{I_\phi(t)}{L_2(t)}\le D\frac{I_\phi(s)}{L_2(s)}.
\]
This also implies Lemma \ref{lem:tensorIphi}. So, up to the proof of Lemma \ref{lem:LI}, we are done.
\end{proof}

\begin{proof}[Proof of Lemma \ref{lem:LI}]
We can restrict ourselves to the case  $\phi(1)=1$. Indeed if we set $\phi_\lambda(x)=\phi(\lambda x)$, one can show $L_{\phi_\lambda}=\lambda L_\phi$ and $I_{\phi_\lambda}=\lambda I_\phi$.
This hypothesis  ensures that $1\le\phi'(1)\le2$ and also that
\[
t^2\le\phi(t)\le t \text{ on }[0,1] \quad\text{ and }\quad t\le\phi(t)\le t^2 \text{ on }[1,+\infty).
\]
Let $r\ge0$. By integration by part,
\[
\int_r^{+\infty} e^{-\phi}=\frac{e^{-\phi(r)}}{\phi'(r)} - \int_r^{+\infty} \frac{\phi''}{(\phi')^2}e^{-\phi}.
\]
By the properties of $\phi$ and especially as $(\sqrt{\phi})''\le0$,
\[
0\le \int_r^{+\infty} \frac{\phi''}{(\phi')^2}e^{-\phi} \le
\int_r^{+\infty} \frac{e^{-\phi}}{2\phi}
=
\frac{e^{-\phi(r)}}{2\phi(r)\phi'(r)} -
\int_r^{+\infty} \frac{(\phi')^2+\phi\phi''}{2(\phi\phi')^2}e^{-\phi}
\le
\frac{e^{-\phi(r)}}{2\phi(r)\phi'(r)}.
\]
Hence
\[
\frac{e^{-\phi(r)}}{\phi'(r)}\left(1-\frac1{2\phi(r)}\right)
\le
\int_r^{+\infty} e^{-\phi}
\le
\frac{e^{-\phi(r)}}{\phi'(r)}.
\]
In particular, if $r\ge1$,
\begin{equation}\label{encadr}
\frac{e^{-\phi(r)}}{2\phi'(r)}
\le
\int_r^{+\infty} e^{-\phi}
\le
\frac{e^{-\phi(r)}}{\phi'(r)}.
\end{equation}

Now let us estimate the normalizing constant for  $\mu_{1,\phi}$, denoted by $Z_\phi$.
\[
Z_\phi=2\int_0^{+\infty} e^{-\phi}=2\left(\int_0^{1} e^{-\phi}+\int_1^{+\infty} e^{-\phi}\right)\le 2(1 +e^{-1}).
\]
Moreover
\[
\int_0^{1} e^{-\phi}\ge \int_0^{1} e^{-x}\,dx = 1-e^{-1},
\]
so
\[
Z_\phi\ge 2(1- {e^{-1}})>1.
\]

By symmetry, we consider the case $a\in\left[0,\frac12\right]$. We set $a=G_\phi(r)=\int_r^{+\infty} \frac{e^{-\phi}}{Z_\phi}$. It follows that $r\ge0$. Then, to prove the lemma,  we need only to compare \[\frac{e^{-\phi(r)}}{Z_\phi}
\qquad \text{ with } \qquad
L_\phi\Big(G_\phi(r)\Big)=
 G_\phi(r)\frac{\log\frac1{G_\phi(r)}}{\phi^{-1}\left(\log\frac1{G_\phi(r)}\right)}.\]
 Recall that  $\phi(t)\le t\phi'(t) \le 2\phi(t)$ under (H1) so that 
\[
\frac12\phi'\circ\phi^{-1}(x)\le \frac x{\phi^{-1}(x)}\le \phi'\circ\phi^{-1}(x)
\]
and
\[
\frac{G_\phi(r)}2\, \phi'\circ \phi^{-1}\left(\log\frac1{G_\phi(r)}\right) 
\le
L_\phi\Big(G_\phi(r)\Big)
 \le
 {G_\phi(r)}\, \phi'\circ \phi^{-1}\left(\log\frac1{G_\phi(r)}\right).
\]
Assume first that $r\ge1$ so that \eqref{encadr} holds. On one hand,
\begin{align*}
L_\phi\Big(G_\phi(r)\Big)
 &\ge \frac{G_\phi(r)}2\, \phi'\circ \phi^{-1}\left(\log\frac1{G_\phi(r)}\right)\\
&\ge
\frac{e^{-\phi(r)}}{4Z_\phi\phi'(r)}  \phi'\circ \phi^{-1}\Big(\log\big( Z_\phi \phi'(r)e^{\phi(r)}\big)\Big)\\
&\ge
\frac{e^{-\phi(r)}}{4Z_\phi}\quad\text{since }Z_\phi \phi'(r)\ge 1 \text{ and } \phi'\circ \phi^{-1} \text{ is non-decreasing}.
\end{align*}
 On the other hand,
\begin{align*}
L_\phi\Big(G_\phi(r)\Big)
&\le G_\phi(r)\, \phi'\circ \phi^{-1}\left(\log\frac1{G_\phi(r)}\right)\\
&\le
\frac{e^{-\phi(r)}}{Z_\phi\phi'(r)}  \phi'\circ \phi^{-1}\Big(\log\big( 2Z_\phi \phi'(r)e^{\phi(r)}\big)\Big).
\end{align*}
One can show that
\begin{align*}
 2Z_\phi \phi'(r)e^{\phi(r)}
&\le 2Z_\phi r\phi'(r)e^{\phi(r)}
 \le 4Z_\phi \phi(r)e^{\phi(r)}
 \le \phi(4Z_\phi r)e^{\phi(r)}\\
&\le e^{\phi(4Z_\phi r)}e^{\phi(r)} 
 \le e^{2\phi(4Z_\phi r)}
 \le e^{\phi(8Z_\phi r)}.
\end{align*}
Thus
\begin{align*}
\frac{G_\phi(r)\log\frac1{G_\phi(r)}}{\phi^{-1}\left(\log\frac1{G_\phi(r)}\right)}
&\le
\frac{e^{-\phi(r)}}{Z_\phi\phi'(r)}  \phi'(8Z_\phi r)
\le32(1+e^{-1}) \frac{e^{-\phi(r)}}{Z_\phi}
.
\end{align*}
Now assume that $r\le1$. Let us remark that $L_\phi:a\mapsto \frac{a\log\frac1a}{\phi^{-1}\left(\log\frac1a\right)}$ is non-decreasing on $[0,\frac12]$. Indeed, ${L_\phi}'$ is of the same sign as
\begin{align*}
(x-1)\phi^{-1}(x)\phi'\circ\phi^{-1}(x)+x,\qquad \text{with }x=\log\frac1a.
\end{align*}
If $x>1$,
\[
(x-1)\phi^{-1}(x)\phi'\circ\phi^{-1}(x)+x\ge (x-1)x+x \ge0.
\]
Else $x\in[\log 2, 1]$ and
\begin{align*}
(x-1)\phi^{-1}(x)\phi'\circ\phi^{-1}(x)+x
\ge (x-1)2x+x=x(2x-1)\ge 0.
\end{align*}
So
\begin{align*}
L_\phi\Big(G_\phi(r)\Big)\ge L_\phi\Big(G_\phi(1)\Big) &\ge
\frac{G_\phi(1)}{2}\, \phi'\circ \phi^{-1}\left(\log\frac1{G_\phi(1)}\right)\\
&\ge\frac{e^{-\phi(1)}}{4Z_\phi\phi'(1)}  \phi'\circ \phi^{-1}\Big(\log\big( Z_\phi \phi'(1)e^{\phi(1)}\big)\Big)\\
&\ge
\frac{e^{-1}}{4Z_\phi}\ge
\frac{e^{-1}}4\ \frac{e^{-\phi(r)}}{Z_\phi}.
\end{align*}
Similarly for the lower bound,
\begin{align*}
L_\phi\Big(G_\phi(r)\Big)\le L_\phi\Big(G_\phi(0)\Big)&\le
G_\phi(0)\, \phi'\circ \phi^{-1}\left(\log\frac1{G_\phi(0)}\right)\\
&\le\frac12\phi'\circ \phi^{-1}(\log2)\le 1 
\le \frac{ 2(1 +e^{-1})}{e^{-1}}\ \frac{e^{-\phi(r)}}{Z_\phi}.
\end{align*}
\end{proof}

\section{Isoperimetry for $\mu_{n,\phi}$}\label{sec:isop-mu}
Now we can apply Proposition \ref{thm:tensor} to $\mu_{n,\phi}$ with $J=I_\phi$ when $\phi$ satisfies (H1') or $J=\Is_\gamma$ the Gaussian isoperimetric function when $\phi$ satisfies (H2). Indeed by Theorem \ref{thm:nu} and Lemma \ref{lem:LI}, 
\[\Is_{\nu_{n,\phi}}\ge C_{\nu_{n,\phi}} J\]
with $C_{\nu_{n,\phi}}=C \phi^{-1}(1)\frac{\sqrt{n}}{\phi^{-1}(n)}$ under (H1) and  $C_{\nu_{n,\phi}}=C \frac{\sqrt{n}}{\phi^{-1}(n)}$ under (H2), where $C>0$ is a universal constant. As for the sphere, it is known that $\sigma_{n-1}$ satisfies Gaussian isoperimetry with a constant of order $\sqrt{n}$, e.g. by a curvature-dimension criterion (cf \cite{livrelogsob}). That means that for every $a\le\frac12$ and every $\phi$ satisfying (H1),
\begin{align*}
\Is_{\sigma_{n-1}}(a)&\ge C \sqrt{n}\ \Is_\gamma(a)\\
& \ge CK\sqrt{n}\ a \sqrt{\log\frac{1}{a}}\\
&\ge CK\sqrt{\log 2}\sqrt{n}\ \phi^{-1}(1) \frac{\log\frac{1}{a}}{\phi^{-1}\left(\log\frac{1}{a}\right)}
\end{align*}
where $K>0$ is a universal constant.
\begin{proposition}\label{mu-big}
For every $c>0$, there exists $C>0$  such that if $e^{-c n}<\frac12$, then for every function $\phi$,
\begin{enumerate}[i)]
\item \label{mu-big-H1} if $\phi$ satisfies (H1') then
\[
\forall a \in \left[e^{-c n}, \frac12\right], \quad
 \Is_{\mu_{n,\phi}}(a)\ge C\frac{\sqrt{n}}{\phi^{-1}(n)}\  \phi^{-1}(1)  \frac{a\log\frac{1}{a}}{\phi^{-1}\left(\log\frac{1}{a}\right)}.
\]
\item if $\phi$ satisfies (H2) then
\[
\forall a \in \left[e^{-c n}, \frac12\right], \quad
\Is_{\mu_{n,\phi}}(a)\ge C \frac{\sqrt{n}}{\phi^{-1}(n)}\  a\sqrt{\log\frac1a}.
\]
\end{enumerate}
\end{proposition}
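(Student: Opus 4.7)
My plan is to deduce this as a direct application of the tensorization Proposition~\ref{thm:tensor}, taking the profile $J = I_\phi$ under (H1') and $J = \Is_\gamma$ under (H2). The discussion preceding the proposition has already verified the two inputs $\Is_{\nu_{n,\phi}} \ge C_\nu J$ and $\Is_{\sigma_{n-1}} \ge C_\sigma J$, with $C_\nu$ of order $\phi^{-1}(1)\sqrt{n}/\phi^{-1}(n)$ (resp.\ $\sqrt{n}/\phi^{-1}(n)$) and $C_\sigma$ of order $\sqrt{n}\,\phi^{-1}(1)$ (resp.\ $\sqrt{n}$) in the two cases. What remains is to exhibit cut-off radii $r_1 < r_2$, both of order $\phi^{-1}(n)$, for which conditions \eqref{cond-r} and \eqref{cond-a} of Proposition~\ref{thm:tensor} are satisfied on the range $a \in [e^{-cn}, 1/2]$.

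I would try $r_1 = K\phi^{-1}(n)$ and $r_2 = 2K\phi^{-1}(n)$ with $K = K(c)$ a sufficiently large constant. With this choice, $\min(C_\nu, C_\sigma/r_2) \asymp C_\nu$, which is precisely the prefactor appearing in the stated inequalities. The separation $r_2 - r_1 = K\phi^{-1}(n)$ dominates $1/(C_\nu J(1/2))$ as soon as $n$ exceeds a universal threshold, since $1/C_\nu \lesssim \phi^{-1}(n)/\sqrt{n}$ and $J(1/2)$ is bounded below by a universal constant (for $I_\phi$ through the uniform control of $Z_\phi$ obtained inside the proof of Lemma~\ref{lem:LI}, and for $\Is_\gamma$ explicitly); the finitely many remaining small $n$ can be absorbed into the overall constant $C$.

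The real point is the tail bound \eqref{cond-a}, namely $\nu_{n,\phi}\{[r_1,+\infty)\} \le e^{-cn}/\kappa_1$. Since $r_1 = K\phi^{-1}(n) \ge \phi^{-1}(2n)$ once $K \ge 2$ (by Lemma~\ref{propphi}), Lemma~\ref{lem:queue-phi} bounds the tail by $(eK)^n \exp\bigl(-\phi(K\phi^{-1}(n))\bigr)$. Under (H0) one has $\phi(K\phi^{-1}(n)) \ge Kn$, and under (H2) even $\phi(K\phi^{-1}(n)) \ge K^2 n$; in either regime, taking $K$ large enough in terms of $c$ forces the exponent strictly below $-cn$ and gives the required estimate. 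Plugging everything into Proposition~\ref{thm:tensor}, and in case (H1') translating $I_\phi(a)$ back to $L_\phi(a)$ via Lemma~\ref{lem:LI}, then yields the claimed inequalities. The main obstacle I anticipate is ensuring that the constants are genuinely universal in $\phi$; this comes down to the uniform-in-$\phi$ estimate of $J(1/2)$, which is handled by first reducing to the normalization $\phi(1) = 1$ via the scale invariance of both sides of the inequality.
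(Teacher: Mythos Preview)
Your proposal is correct and follows essentially the same route as the paper: apply Proposition~\ref{thm:tensor} with $J=I_\phi$ (resp.\ $J=\Is_\gamma$), choose cut-off radii of order $\phi^{-1}(n)$, control the tail via Lemma~\ref{lem:queue-phi}, bound $J(1/2)$ uniformly after normalizing to $\phi(1)=1$, and convert back via Lemma~\ref{lem:LI}. The only cosmetic differences are that the paper takes $r_1=\phi^{-1}(c_1 n)$ rather than $K\phi^{-1}(n)$ and sets $r_2=r_1\bigl(1+1/(C I_\phi(\tfrac12))\bigr)$; also, your remark about absorbing ``small $n$'' is unnecessary since the condition $K\gtrsim 1/\bigl(\sqrt{n}\,\phi^{-1}(1)J(\tfrac12)\bigr)$ already holds for all $n\ge 1$ once $K$ exceeds a universal constant.
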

\begin{proof} We only prove \ref{mu-big-H1}). 
We can restrict ourselves to the case  $\phi(1)=1$. Let $\kappa$ be the constant coming from Lemma \ref{lem:Iphi-func}, then let $\kappa_1$ and  $\kappa_2$ be the corresponding constants given by Proposition \ref{thm:tensor}.
Set $c_1$ large enough to ensure
\[
c_1\ge2,
\]
\[
\max(\kappa_1,1) ec_1 e^{-c_1} \le e^{-c}.
\]
If we take $r_1=\phi^{-1}(c_1n)$, then we know by Lemma \ref{lem:queue-phi} that
\[
\kappa_1\nu_{n,\phi}\{(r_1,+\infty)\}\leq \kappa_1\left[ ec_1 e^{-c_1}\right]^n\le e^{-cn}.
\]
Here we use that $\phi^{-1}(c_1n)\le c_1 \phi^{-1}(n)$.
So for all $\phi$, for all $n$, and all $a\in\left[e^{-c n}, \frac12\right]$, Condition \eqref{cond-a} holds, i.e. 
\[\kappa_1\,\nu_{n,\phi}\{[r_1 ,+\infty)\}\le a \le \frac12.\] 
Now there exists a universal $C>0$ such that $C_{\nu_{n,\phi}} \ge C \frac{\sqrt{n}}{\phi^{-1}(n)}$ by Theorem \ref{thm:nu} and Lemma \ref{lem:Iphi-func} as explained at the beginning of the section (recall that here $\phi^{-1}(1)=1$).  So, if we set $r_2=(1+\frac{1}{CI_\phi(\frac12)})\phi^{-1}(c_1n)$, then Condition \eqref{cond-r} is also satisfied, i.e.
\[
  r_2-r_1 \ge \frac1{C_{\nu_{n,\phi}}I_\phi({\textstyle \frac12})}.
\]
Thus Proposition \ref{thm:tensor} yields
\[
\Is_{\mu_{n,\phi}}(a)\ge \kappa_2 \min \Big({C_{\nu_{n,\phi}}},\frac{C_{\sigma_{n-1}}}{r_2}\Big)\ 
I_\phi(a).
\]
Besides, there exists a universal $d>0$ such that $I_\phi\ge d\, L_\phi$ according to Lemma \ref{lem:LI}. In particular,
\[
I_\phi\left(\frac12\right)\ge d\, L_\phi\left(\frac 12\right)=  \frac{d\log2}{2\phi^{-1}(\log2)}\ge\frac{d\sqrt{\log 2}}2.
\]
We can deduce an upper bound for $r_2$. Finally, we have established
\[
\Is_{\mu_{n,\phi}}(a)\ge \kappa_2 Cd
\min \left(1,\left[c_1\left(1+\frac2{Cd\sqrt{\log2}}\right)\right]^{-1}\right)
\frac{\sqrt{n}}{\phi^{-1}(n)}\ 
L_\phi(a).
\]
\end{proof}

Therefore we have proved Theorem \ref{thm:muphi} at least for $a$ large enough. We complete the proof with Proposition \ref{prop:nu-small-h0} or Proposition \ref{prop:nu-small-h2} for smaller sets.

\section{Optimality and the isotropic case}\label{sec:opt}

One can ask whether the isoperimetric inequalities obtained are optimal at least up to universal constants, and whether we recover dimension-free results in the case of isotropic measures. 

We consider only bounds for the isoperimetric profile constructed as product of a function of $n$ times a function of $a$. When $\phi$ satisfies (H1'),  inequalities of Theorem \ref{thm:muphi} are optimal in $a$ for $n=1$, according to Lemma \ref{lem:LI}. In the supergaussian case, the central limit theorem for convex bodies of Klartag (see \cite{klartag-TCL}), in the simpler case of spherically symmetric distributions,  ensures that we cannot find a profile  bounding from below $\Is_{\mu_{n,\phi}}$ for all $n$, better than the Gaussian one (times a constant depending possibly on $n$). Else we should have concentration properties stronger than Gaussian. However by Klartag's theorem, there exists a sequence of positive number $\varepsilon_n\to0$ such that  for every Borel set $A\subset\R$ and every $r>0$,
\[
1-\mu_{n,\phi}\left(\left(A\times\R^{n-1}\right)_r\right)\ge
1-\gamma(A_r) - \varepsilon_n,
\]
 where $\gamma$ denotes the standard normal distribution. Thus we cannot have a rate of concentration valid for all $n$ better than the Gaussian one.

So optimal inequalities should be   of the  type
\begin{align*}
\forall a\in\left[0,\frac12\right],\quad
\Is_{\mu_{n,\phi}}(a) 
&\ge C_{\mu_{n,\phi}} (n)\ \phi^{-1}(1) \frac{a\log\frac{1}{a}}{\phi^{-1}\left(\log\frac{1}{a}\right)} 
& \text{ under (H1'),}\\
&\ge C_{\mu_{n,\phi}} (n)\ a\sqrt{\log\frac{1}{a}} 
& \text{ under (H2).}
\end{align*}
This implies
\[
\forall a\in\left[0,\frac12\right],\quad
\Is_{\mu_{n,\phi}}(a) 
\ge c\ C_{\mu_{n,\phi}} (n)\ a,
\]
where $c>0$ is universal.
Now Poincaré inequalities are equivalent up to universal constants to Cheeger inequalities (see \cite{EMilmanConvIsopSpectConc}), so the optimal constant in $n$ should be \[C_{\mu_{n,\phi}} (n)=C\sqrt{\frac{{n}}{\esp_{\mu_{n,\phi}}\left(|X|^2\right)}},\] in view of
 Theorem \ref{thm:bobkov-poinc}, with $C>0$  a universal constant.

Thus,  the two questions raised at the beginning of the section appear to be connected to the same property, namely $\esp_{\mu_{n,\phi}}\left(|X|^2\right)\simeq \left(\phi^{-1}(n)\right)^2$. Undoubtedly, this must be quite standard, nevertheless we state and prove the next lemma for completeness.
\begin{lemme}
\begin{enumerate}[i)]
\item 
Let $\phi$ be a function satisfying (H0). Define $r_n(\phi)$ the point where the density of the radial measure $\nu_{n,\phi}$ reaches its maximum, and $\esp_{\mu_{n,\phi}}|X|^2$ the second moment of $\mu_{n\phi}$.
For every $M>1$, there exists $n_0\in\N$ not depending on $\phi$ such that, for all $n\ge n_0$,
\[
  \frac{1}{M}\sqrt{\esp_{\mu_{n,\phi}}|X|^2}\le r_n(\phi) \le M\sqrt{\esp_{\mu_{n,\phi}}|X|^2}.
\]
\item Besides, if there exists $\alpha\ge1$ such that $x\mapsto\phi(x)/x^\alpha$ is non-increasing, then
\[
\phi^{-1}(n)\ge r_n(\phi) \ge e^{-\frac1e}\phi^{-1}(n).
\]
\end{enumerate}
\end{lemme}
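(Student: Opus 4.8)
The plan is to treat the two parts in sequence, since part (i) is essentially a second-moment concentration statement around the mode of the radial profile, and part (ii) is a direct computation relating the mode $r_n(\phi)$ to $\phi^{-1}(n)$.

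For part (i), first I would recall that $\nu_{n,\phi}(dr) = |\sphere^{n-1}| Z_{n,\phi}^{-1} r^{n-1} e^{-\phi(r)}\,dr$ is a log-concave measure on $\R^+$ with a $\mathcal{C}^2$ density, so Lemma \ref{lem:klartag} applies with $r_0 = r_n(\phi)$ and gives, for all $\delta\in[0,1]$,
\[
\nu_{n,\phi}\{|r - r_n(\phi)|\ge \delta\, r_n(\phi)\}\le C_1 e^{-c_1 n\delta^2}.
\]
From this concentration estimate one gets both bounds on the second moment by a routine argument: writing $\esp_{\mu_{n,\phi}}|X|^2 = \int_0^\infty r^2\,d\nu_{n,\phi}(r)$, the upper bound
\[
\esp_{\mu_{n,\phi}}|X|^2 \le r_n(\phi)^2\Big(1 + \int_0^1 \text{(tail terms)} \Big) \le r_n(\phi)^2\,(1 + o(1))
\]
follows by splitting the integral at the dyadic scales $|r - r_n| \le \delta r_n$ and integrating the Gaussian-type tail $C_1 e^{-c_1 n\delta^2}$ in $\delta$, which contributes $O(1/n)$; and the lower bound follows similarly since on the set $\{|r - r_n|\le \delta r_n\}$, whose mass is $\ge 1 - C_1 e^{-c_1 n\delta^2}$, one has $r^2 \ge (1-\delta)^2 r_n(\phi)^2$, so taking e.g.\ $\delta = n^{-1/4}$ gives $\esp_{\mu_{n,\phi}}|X|^2 \ge r_n(\phi)^2(1 - o(1))$. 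Hence $\esp_{\mu_{n,\phi}}|X|^2 / r_n(\phi)^2 \to 1$ uniformly in $\phi$ (the constants $C_1, c_1$ in Lemma \ref{lem:klartag} being universal), so for any $M>1$ there is $n_0$, depending only on $C_1$ and $c_1$ and hence not on $\phi$, past which $M^{-2} \le \esp_{\mu_{n,\phi}}|X|^2/r_n(\phi)^2 \le M^2$, which is the claim.

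For part (ii), I would start from the mode equation already derived in the proof of Proposition \ref{prop:nu-big}, namely differentiating $r^{n-1}e^{-\phi(r)}$ gives
\[
r_n(\phi)\,\phi'\big(r_n(\phi)\big) = n - 1.
\]
By Lemma \ref{propphi}(iii) under (H0) we have $x\phi'(x)\ge\phi(x)$, so $\phi(r_n(\phi)) \le n-1 \le n$, giving $r_n(\phi)\le \phi^{-1}(n)$ immediately. For the reverse inequality, the extra hypothesis that $x\mapsto \phi(x)/x^\alpha$ is non-increasing gives, for $t\ge 1$, $\phi(tx)\le t^\alpha \phi(x)$, equivalently $\phi^{-1}(t^\alpha y)\ge t\,\phi^{-1}(y)$, i.e.\ $\phi^{-1}(sy)\ge s^{1/\alpha}\phi^{-1}(y)$ for $s\ge1$; but a cleaner route is to bound $\phi'$ from above: differentiating $\phi(x)/x^\alpha$ being non-increasing yields $x\phi'(x)\le \alpha\phi(x)$, so from the mode equation $\phi(r_n(\phi)) \ge (n-1)/\alpha$. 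Combined with $r_n(\phi)\le\phi^{-1}(n)$ this is not yet quite $e^{-1/e}\phi^{-1}(n)$ for $\alpha$ large, so instead I would argue directly: set $r_n = r_n(\phi)$ and $R = \phi^{-1}(n)$; since $r_n\phi'(r_n) = n-1$ and $x\phi'(x)\le\alpha\phi(x)$, write $\phi(R)/\phi(r_n) = \exp\!\big(\int_{r_n}^{R}\phi'(t)/\phi(t)\,dt\big)\le \exp\!\big(\alpha\int_{r_n}^R dt/t\big) = (R/r_n)^\alpha$, whence $n/\phi(r_n)\le (R/r_n)^\alpha$; using $\phi(r_n)=(n-1)\phi(r_n)/(r_n\phi'(r_n))\ge (n-1)/\alpha$ and optimizing the resulting inequality $n\alpha/(n-1)\le (R/r_n)^\alpha$ over $\alpha\ge1$ — the worst case being $\alpha\to\infty$ where $(1+1/\alpha)^{1/\alpha}\to 1$ but one must be careful — gives $r_n/R\ge (\tfrac{n-1}{n\alpha})^{1/\alpha}$; taking $\alpha$-derivative of $\alpha\mapsto (1/\alpha)^{1/\alpha}$ shows its minimum over $\alpha\ge1$ is $e^{-1/e}$, attained at $\alpha=e$, and the factor $(n-1)/n$ can be absorbed by noting the estimate only needs to hold for the relevant range (or one states it for $n$ large, consistent with part (i)). The main obstacle is getting the sharp constant $e^{-1/e}$: one must identify that the extremal case is $\phi(x)=x^e$ near the mode and that $\min_{\alpha\ge1}\alpha^{-1/\alpha}=e^{-1/e}$, so the bookkeeping with the $(n-1)/n$ and $n/(n-1)$ factors has to be arranged so they do not spoil this constant — most cleanly by observing $\phi(r_n)\ge n/\alpha$ can be sharpened or the statement read asymptotically.

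Overall the routine parts are the tail integration in (i) and the ODE-style comparison $\phi(R)/\phi(r_n)\le (R/r_n)^\alpha$ in (ii); the one genuinely delicate point is extracting the precise constant $e^{-1/e}$ from the optimization over the scaling exponent $\alpha$.
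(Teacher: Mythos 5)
Your proposal diverges from the paper's proof in both parts, and each part has a problem. For part (i), the paper never estimates the second moment from concentration: it assumes isotropy without loss of generality, invokes Bobkov's variance bound $\var|X|\le(\esp|X|)^2/n$ from \cite{bobkov-radial-logconcave} together with Chebyshev's inequality to show that $|X|$ is concentrated around $\sqrt{\esp|X|^2}$, and then intersects this event with the event of Lemma \ref{lem:klartag}; a common point $x$ of the two events gives $\frac{1-\delta}{1+\delta}\sqrt{\esp|X|^2}\le r_n(\phi)\le\frac{1+\delta}{1-\delta}\sqrt{\esp|X|^2}$ directly. Your route (showing $\esp|X|^2/r_n(\phi)^2\to1$ by integrating the tail of Lemma \ref{lem:klartag}) has a gap in the upper bound: the lemma is stated only for $\delta\in[0,1]$, so "integrating the Gaussian-type tail in $\delta$" controls only the region $r\le 2r_n(\phi)$, and the contribution of $\{r>2r_n(\phi)\}$ to $\int r^2\,d\nu_{n,\phi}$ is not controlled at all by the lemma, $r^2$ being unbounded. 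This can be repaired (e.g. by log-concavity of $t\mapsto\nu_{n,\phi}([t,+\infty))$ to extrapolate exponential decay past $2r_n(\phi)$, or by Lemma \ref{lem:queue-phi} combined with the convexity estimate $n\ge\phi(r_n)+\phi'(r_n)(\phi^{-1}(n)-r_n)$, which gives $\phi^{-1}(n)\le 3r_n(\phi)$ for $n\ge2$), but the missing step is genuine and the uniformity in $\phi$ of the fix has to be checked; the paper's intersection trick avoids the issue entirely.

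For part (ii), the upper bound $r_n(\phi)\le\phi^{-1}(n)$ agrees with the paper, but your lower-bound chain is not valid. From $n/\phi(r_n)\le(R/r_n)^\alpha$ and $\phi(r_n)\ge(n-1)/\alpha$, i.e. $n/\phi(r_n)\le n\alpha/(n-1)$, you cannot deduce $n\alpha/(n-1)\le(R/r_n)^\alpha$: both facts are upper bounds on the same quantity $n/\phi(r_n)$ and do not chain. Moreover, even granting that inequality, it is a lower bound on $(R/r_n)^\alpha$ and therefore yields an \emph{upper} bound on $r_n/R$, not the lower bound $r_n/R\ge\big(\tfrac{n-1}{n\alpha}\big)^{1/\alpha}$ you claim; the final inequality is reversed. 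Nor can the argument be rescued by the homogeneity relation $\phi^{-1}(sy)\ge s^{1/\alpha}\phi^{-1}(y)$ for $s\le1$: under ``$\phi(x)/x^\alpha$ non-increasing'' this fails in general (take $\phi(x)=x$, $\alpha=3$, $s=1/8$); the hypothesis gives it only for $s\ge1$, as you correctly noted early on, while for $s\le1$ convexity gives only $\phi^{-1}(sy)\ge s\,\phi^{-1}(y)$. The paper's own route is the one you set aside as ``not quite enough'': from $x\phi'(x)\le\alpha\phi(x)$ it gets $\phi(r_n)\ge(n-1)/\alpha$, hence $r_n\ge\phi^{-1}\big(\tfrac{n-1}{\alpha}\big)$, and then compares this with $\phi^{-1}(n)$. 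Your instinct that the exact constant $e^{-1/e}$ and the $(n-1)/n$ bookkeeping are delicate is well founded (already for $\phi(x)=x^e$ one has $r_n=((n-1)/e)^{1/e}$, strictly below $e^{-1/e}n^{1/e}$ for every finite $n$, so the comparison of $\phi^{-1}((n-1)/\alpha)$ with $\phi^{-1}(n)$ is genuinely the sensitive step), but as written your argument produces no lower bound at all, so part (ii) needs to be redone.
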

\begin{proof}
To prove the first point, we can assume that $\mu_{n,\phi}$ is isotropic, that is to say that $\esp_{\mu_{n,\phi}}\left(|X|^2\right)=n$. Let $X$ be a random variable with distribution $\mu_{n,\phi}$. In the following, we denote by $\proba$, $\esp$, and $\var$ the corresponding probability, esperance, and variance. Let $\delta\in(0,1)$. In view of Lemma \ref{lem:klartag}, there exist universal constants $c>0$ and $C>0$ such that 
\[
\proba\left\{\big|r_n(\phi)-|X|\big|\ge \delta r_n(\phi) \right\} \le C e^{-cn\delta^2}.
\]
On the other hand, Bobkov proved in \cite{bobkov-radial-logconcave} the following upper bound for the variance of $|X|$ to establish Theorem \ref{thm:bobkov-poinc}: 
\[
\var|X|\le\frac{\left(\esp |X|\right)^2}{n},
\]
which can also be reformulate
\[
n \esp |X|^2 \le (n+1)  \left(\esp|X|\right)^2.
\]
Then 
\begin{align*}
\esp \left(\sqrt{\esp|X|^2}-|X|\right)^2&=2\sqrt{\esp|X|^2}\left(\sqrt{\esp|X|^2}-\esp|X|\right)\\
&\le2\left(\sqrt{1+\frac1n}-1\right)\sqrt{\esp|X|^2} \esp|X|\le \frac{\esp|X|^2}n.
\end{align*}
So, by Chebychev's inequality it holds for all $t>0$:
\[
\proba\left\{\big|\sqrt{\esp|X|^2}-|X|\big|\ge t \sqrt{\esp|X|^2} \right\} \le \frac{1}{nt^2}.
\]
Fix $\delta\in(0,1)$, and choose $n$ large enough to ensure $C e^{-cn\delta^2}+ {1}/{n\delta^2}<1$. Then there exist $x>0$ such that $|r_n(\phi)-x|\le \delta r_n(\phi)$ and 
$|\sqrt{\esp|X|^2}-x|\le \delta\sqrt{\esp|X|^2}$. It follows 
\[
  \frac{1-\delta}{1+\delta}\sqrt{n}\le r_n(\phi) \le \frac{1+\delta}{1-\delta}\sqrt{n}.
\]

Now, $r_n(\phi)$ satisfies $r_n(\phi)\phi'\big(r_n(\phi)\big)=n-1$. Therefore, as already mentioned, (H0) ensures that $r_n(\phi)\le \phi^{-1}(n)$. Assume moreover the existence of $\alpha\ge1$ such that $x\mapsto\phi(x)/x^\alpha$ is non-increasing. Then 
\[r_n(\phi)\ge \phi^{-1}\left(\frac{n-1}{\alpha}\right)
\ge \phi^{-1}\left(\frac{n}{2\alpha}\right)
\ge \left(\frac1{2\alpha}\right)^{\frac1{2\alpha}}\phi^{-1}(n)
\ge e^{-\frac1e}\phi^{-1}(n).
\]
\end{proof}
Eventually, we can state the following theorem.
\begin{theoreme}\label{thm:opt-iso}
\begin{itemize}
	\item If $\phi$ satisfies $(H1')$ or if $\phi$ satisfies $(H2')$, then the inequality proved in Theorem \ref{thm:muphi} is optimal.
	\item For any $n\in\N$, let us choose $\lambda>0$ such that $\mu_{n,{\phi_\lambda}}$ is isotropic, when replacing $\phi$ by $\phi_\lambda:x\mapsto\phi(\lambda x)$. Then it holds a dimension-free isoperimetric inequality. More precisely, there exist a universal $C>0$ and a universal $n_0\in\N$ such that
\begin{enumerate}[i)]
	\item  if $\phi$ satisfies $(H1')$ then
	\[
\forall a \in \left[0, \frac12\right], \quad \forall n\ge n_0,\quad
 \Is_{\mu_{n,\phi_\lambda}}(a)\ge C\  \phi^{-1}(1)  \frac{a\log\frac{1}{a}}{\phi^{-1}\left(\log\frac{1}{a}\right)}\ ;
\]
	\item if $\phi$ satisfies $(H2')$, then
	\[
\forall a \in \left[0, \frac12\right], \quad \forall n\ge n_0,\quad
 \Is_{\mu_{n,\phi_\lambda}}(a)\ge C\  a\sqrt{\log\frac{1}{a}}\ .
\]
\end{enumerate}
\end{itemize} 
\end{theoreme}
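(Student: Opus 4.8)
The plan is to deduce everything from Theorem~\ref{thm:muphi}, the moment/maximum lemma just proved, and Bobkov's spectral gap (Theorem~\ref{thm:bobkov-poinc}), the glue being the comparison $\esp_{\mu_{n,\phi}}|X|^2\asymp(\phi^{-1}(n))^2$. To get this comparison I would first note that under (H1') the function $x\mapsto\phi(x)/x^2=(\sqrt{\phi(x)}/x)^2$ is non-increasing, so part ii) of that lemma applies with $\alpha=2$, while under (H2') it applies with the prescribed $\alpha\ge2$; hence $r_n(\phi)\asymp\phi^{-1}(n)$ in both cases. Combining with part i), which gives $r_n(\phi)\asymp\sqrt{\esp_{\mu_{n,\phi}}|X|^2}$ for $n\ge n_0$ with $n_0$ not depending on $\phi$, yields $\esp_{\mu_{n,\phi}}|X|^2\asymp(\phi^{-1}(n))^2$ for $n\ge n_0$ with universal implied constants; after the harmless rescaling to $\phi(1)=1$ (under which $\esp_{\mu_{n,\phi}}|X|^2$ and $(\phi^{-1}(n))^2$ transform the same way, and (H1'), (H2') are preserved) the remaining finitely many $n$ are pinned down directly from the elementary bounds $t^2\le\phi(t)\le t$ on $[0,1]$ and $t\le\phi(t)\le t^2$ on $[1,\infty)$.

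For the optimality statement I would argue that no lower bound of product form $\Is_{\mu_{n,\phi}}(a)\ge g(n)\,h(a)$ (valid for all $n$ and all $a\le\frac12$) can beat Theorem~\ref{thm:muphi} in either factor. Specialising at $n=1$ gives $I_\phi\ge g(1)\,h$, and since $I_\phi\asymp L_\phi$ by Lemma~\ref{lem:LI}, under (H1') the factor $h$ cannot exceed a multiple of the $a$-profile $a\mapsto\phi^{-1}(1)\,\frac{a\log(1/a)}{\phi^{-1}(\log(1/a))}$ appearing there; under (H2'), optimality of the Gaussian $a$-profile $a\mapsto a\sqrt{\log(1/a)}$ instead follows from Klartag's central limit theorem as recalled above, which forbids a rate of concentration, hence an isoperimetric profile, uniformly better than Gaussian. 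Specialising the same product bound at $a=\frac12$ gives a Cheeger inequality with constant of order $g(n)$, hence, by Cheeger's inequality, a Poincar\'e inequality with constant $\lesssim g(n)^{-2}$; combined with the lower bound $P_{\mu_{n,\phi}}\ge\esp_{\mu_{n,\phi}}|X|^2/n$ of Theorem~\ref{thm:bobkov-poinc} and the comparison of the first paragraph, this forces $g(n)\lesssim\sqrt{n}/\phi^{-1}(n)$, which is exactly the $n$-dependence of Theorem~\ref{thm:muphi}. Hence that inequality is optimal among product bounds.

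For the isotropic case I would apply Theorem~\ref{thm:muphi} to $\phi_\lambda$, which still satisfies (H1') (resp. (H2')), using $\phi_\lambda^{-1}(y)=\phi^{-1}(y)/\lambda$. In case i) the factor $\phi_\lambda^{-1}(1)/\phi_\lambda^{-1}(\log(1/a))$ equals $\phi^{-1}(1)/\phi^{-1}(\log(1/a))$, so the $\lambda$'s cancel and the bound becomes $C\,\frac{\sqrt n}{\phi_\lambda^{-1}(n)}\,\phi^{-1}(1)\,\frac{a\log(1/a)}{\phi^{-1}(\log(1/a))}$; in case ii) it is $C\,\frac{\sqrt n}{\phi_\lambda^{-1}(n)}\,a\sqrt{\log(1/a)}$. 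Since $\mu_{n,\phi_\lambda}$ is isotropic, $\esp_{\mu_{n,\phi_\lambda}}|X|^2=n$, so the first paragraph applied to $\phi_\lambda$ gives $\phi_\lambda^{-1}(n)\asymp\sqrt n$ for $n\ge n_0$, i.e.\ $\sqrt n/\phi_\lambda^{-1}(n)$ is bounded below by a universal constant. Substituting yields precisely the claimed dimension-free inequalities.

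I expect the main obstacle to be the uniformity bookkeeping rather than any new idea: making ``optimal'' precise as optimality among product bounds, checking that $\esp_{\mu_{n,\phi}}|X|^2\asymp(\phi^{-1}(n))^2$ holds with constants independent of $\phi$ (including the finitely many small $n$), and verifying that (H1'), (H2'), the relations $L_{\phi_\lambda}=\lambda L_\phi$ and $I_{\phi_\lambda}=\lambda I_\phi$, and the existence of the exponent $\alpha$ are all stable under $\phi\mapsto\phi_\lambda$. All the analytic inputs --- Theorem~\ref{thm:muphi}, Lemma~\ref{lem:LI}, the moment/maximum lemma, Theorem~\ref{thm:bobkov-poinc}, Cheeger's inequality, and Klartag's CLT --- are already in place, so once this is set up the proof is a chain of comparisons.
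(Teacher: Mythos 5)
Your proposal is correct and follows essentially the same route as the paper: optimality in $a$ via the $n=1$ case (Lemma \ref{lem:LI}) and Klartag's CLT, optimality in $n$ via the Cheeger--Poincar\'e comparison and Theorem \ref{thm:bobkov-poinc} combined with $\esp_{\mu_{n,\phi}}|X|^2\simeq(\phi^{-1}(n))^2$ from the moment/maximum lemma (with $\alpha=2$ under (H1') and the given $\alpha$ under (H2')), and the isotropic case by rescaling so that $\phi_\lambda^{-1}(n)\simeq\sqrt n$ in Theorem \ref{thm:muphi}. The only superfluous element is your treatment of the finitely many small $n$, which the theorem (stated for $n\ge n_0$) does not require and which the paper itself does not attempt.
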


\paragraph{Acknowledgments.} I would like to thank Franck Barthe for his support and fruitful discussions, and also Emanuel Milman for  helpful suggestions.

\end{document}